\theoremstyle{plain}
\newtheorem{thm}{Theorem}[section]
\newtheorem{lemma}[thm]{Lemma}
\newtheorem{cor}[thm]{Corollary}
\newtheorem{prop}[thm]{Proposition}
\theoremstyle{definition}
\newtheorem{remark}[thm]{Remark}
\begin{document}

\title{Springer Isomorphisms In Characteristic $p$}

\author[Paul Sobaje]
{Paul Sobaje}

\begin{abstract}

\sloppy{
Let $G$ be a simple algebraic group over an algebraically closed field of characteristic $p$, and assume that $p$ is a separably good prime for $G$.  Let $P$ be a parabolic subgroup whose unipotent radical $U_P$ has nilpotence class less than $p$.  We show that there exists a particularly nice Springer isomorphism for $G$ which restricts to a certain canonical isomorphism $\textup{Lie}(U_P) \xrightarrow{\sim} U_P$ defined by J.-P. Serre.  This answers a question raised both by G. McNinch in \cite{M2}, and by J. Carlson \textit{et. al} in \cite{CLN}.  For the groups $SL_n, SO_n$, and $Sp_{2n}$, viewed in the usual way as subgroups of $GL_n$ or $GL_{2n}$, such a Springer isomorphism can be given explicitly by the Artin-Hasse exponential series.
}

\end{abstract}

\maketitle

Let $k$ be an algebraically closed field of characteristic $p>0$.  Let $G$ be a simple algebraic group over $k$, by which we mean that $G$ is non-commutative and that the trivial group is the only connected normal reduced algebraic subgroup (this is sometimes referred to as quasi-simple or almost-simple).  Assume that $p$ is \textit{good} for $G$.  Let $P$ be a parabolic subgroup whose unipotent radical $U_P$ has nilpotence class less than $p$, and let $\mathfrak{g}, \mathfrak{u}_P$ be the Lie algebras of $G$ and $U_P$ respectively.  Denote by $\mathcal{N}(\mathfrak{g})$ the nilpotent variety of $\mathfrak{g}$, and by $\mathcal{U}(G)$ the unipotent variety of $G$.

An argument due to J.-P. Serre, given in \cite{Ser} and more elaborately in \cite{Sei}, demonstrates that there is a canonical $P$-equivariant isomorphism $\varepsilon_P: \mathfrak{u}_P \xrightarrow{\sim} U_P$ which is uniquely determined by a few desirable properties which we will detail below.  If the prime $p$ is also \textit{separably good} for $G$, which means that is it good and that it does not divide the order of the fundamental group of $G$, then in this case T.A. Springer proved that there exists a $G$-equivariant isomorphism between $\mathcal{N}(\mathfrak{g})$ and $\mathcal{U}(G)$ \cite{Sp2}; such a map is known as a `Springer isomorphism.'  There are in general many Springer isomorphisms for a given $G$; Serre has shown in the appendix to \cite{M2} that they can be parameterized by a variety of dimension equal to the rank of $G$.  However, every Springer isomorphism for $G$ will restrict to a $P$-equivariant isomorphism between $\mathfrak{u}_P$ and $U_P$ \cite[Remark 10]{M2}, and G. McNinch observed in Remark 27 of \textit{loc. cit.} that if $p \ge h$, the Coxeter number of $G$, then there is always some Springer isomorphism whose restriction yields $\varepsilon_P$ for any parabolic subgroup $P$, each of which has unipotent radical with nilpotence class less than $p$.  The author then asks if this remains true when $p<h$.  This same question also appears in work by J. Carlson, Z. Lin, and D. Nakano \cite[\S 2.7]{CLN}, where the question is posed because an answer in the affirmative would give an immediate proof of, and in fact extend, \cite[Theorem 3]{CLN} (see Remark \ref{CLNexplained} below).

In this paper we show that when $p$ is separably good for $G$ there is indeed a Springer isomorphism which restricts to $\varepsilon_P$ on every parabolic subgroup $P$ whose unipotent radical has nilpotence class less than $p$.  Thanks to a result due to McNinch in \cite{M}, we show that if $G$ is a classical simple subgroup of $GL_n$ then a particular Springer isomorphism with this property can be given explicitly by the Artin-Hasse exponential series.  On the other hand, proving that these isomorphisms exist in general relies on work by G. Seitz in \cite{Sei} and \cite{Sei2} on abelian unipotent overgroups of unipotent elements in $G$.  These papers in turn depend on and sharpen earlier results by D. Testerman \cite{T} and R. Proud \cite{P}.

To further highlight the relevance of these results, we point out that if $H$ is a simple algebraic group over an algebraically closed field of characteristic $0$, then there is always a ``preferred" Springer isomorphism which is given by the exponential map.  That is, $H$ may be embedded in some $GL_n$, and the exponential map on nilpotent matrices in $\mathfrak{gl}_n$ restricts to a Springer isomorphism for $H$ (as follows from \cite[Proposition 7.1]{M}).   Theorem \ref{main} can therefore be seen as an attempt to find a suitable analogue of the exponential map in prime characteristic (although there is no claim that when $p<h$ the properties in the theorem uniquely specify an isomorphism).  For instance, if $0 \ne X \in \mathcal{N}(\mathfrak{h})$ then the exponential map takes the line $kX$ to a one-parameter additive subgroup of $H$, and Theorem \ref{main}(3) gives a characteristic $p$ generalization of this, replacing one-parameter subgroups with Witt groups.  The exponential map also defines a group isomorphism between the Lie algebra of a Borel subgroup $B$ (as a group under the Baker-Campbell-Hausdorff formula) and $B$, while Theorem \ref{main}(1) effectively gives the strongest version of this in positive characteristic.

\section{Preliminaries}

\subsection{Notation and Conventions}

Throughout $G$ will denote a simple algebraic group over an algebraically closed field $k$ of characteristic $p>0$.  Fix a maximal torus of $G$, and let $\Phi$ denote the root system of $G$ with respect to this choice.  We say that $p$ is a \textit{good} prime for $G$ if $p$ does not divide the coefficients of the highest root of $\Phi$ with respect to some choice of simple roots.  This means that $p>2$ if $G$ is of type $B,C$ or $D$; $p>3$ if $G$ is of type $E_6,E_7,F_4$ or $G_2$; and $p>5$ if $G=E_8$ .  We follow J. Pevtsova and J. Stark in saying that $p$ is \textit{separably good} if it does not divide the order of the fundamental group of $G$.  This is already satisfied by good primes in all types except for type $A$.  We note that $p$ is separably good if and only if $p$ is good for $G$ and the covering $G_{sc} \rightarrow G$ is a separable morphism, where $G_{sc}$ denotes the simply-connected group isogenous to $G$ (this matches the formulation in \cite[Definition 2.2]{PS}).

We denote  by $\mathcal{U}(G)$ the unipotent variety of $G$, and by $\mathcal{N}(\mathfrak{g})$ the nilpotent variety of its Lie algebra.  The conjugation action of $G$ on itself induces an action on both $\mathcal{U}(G)$ and $\mathcal{N}(\mathfrak{g})$.  Each variety is irreducible, and each has a unique open orbit under the action of $G$, referred to in both contexts as the \textit{regular} orbit.  An element in the regular nilpotent (resp. unipotent) orbit is called a regular nilpotent (resp. unipotent) element.  The subvariety of $p$-unipotent elements in $G$ will be denoted by $\mathcal{U}_1(G)$, while $\mathcal{N}_1(\mathfrak{g})$ denotes the $[p]$-nilpotent variety of $\mathfrak{g}$, where $x \mapsto x^{[p]}$ is the restriction map on $\mathfrak{g}$.  We also will refer to $\mathcal{N}_1(\mathfrak{g})$ as the restricted nullcone.  An element $X$ will be said to have nilpotent order $p^m$ if $X^{[p^m]}=0$ and $X^{[p^{m-1}]} \ne 0$.

Let $P$ be a parabolic subgroup of $G$ whose unipotent radical $U_P$ has nilpotence class less than $p$.  We will follow \cite{CLNP} in referring to $P$ as a \textit{restricted} parabolic subgroup of $G$ (we note that this formulation is stated differently than in \textit{loc. cit.}, though it is equivalent in the cases we are considering).

For any affine algebraic group $H$ over $k$, we denote by $k[H]$ its coordinate algebra.  This is a commutative Hopf algebra over $k$.  The subgroup $H^0$ denotes the identity component of $H$.

We recall that an abstract group $\Gamma$ is called nilpotent if its descending central series has finite length, in which case this length is known as the nilpotence class of $\Gamma$.

\subsection{Springer Isomorphisms}

For $p$ separably good for $G$, Springer \cite{Sp2} first proved that there exists a $G$-equivariant homeomorphism between $\mathcal{N}(\mathfrak{g})$ and $\mathcal{U}(G)$.  This was later shown to be an isomorphism due to the normality of both varieties.  Springer's method will not be recounted here (but is presented clearly in \cite[\S 6.21]{H}).  Rather, we aim to justify the claim that finding a Springer isomorphism for $G$ (in separably good characteristic) reduces to finding a regular nilpotent element $X$ and a regular unipotent element $u$ whose centralizers $C_G(X)$ and $C_G(u)$ are equal.

First, we noted earlier that the regular unipotent and the regular nilpotent orbits are open, and in fact both have the property that their complements are of codimension at least $2$ in their respective varieties.  The normality of $\mathcal{N}(\mathfrak{g})$ and $\mathcal{U}(G)$ then allows for any isomorphism between these orbits to be extended uniquely to an isomorphism between $\mathcal{N}(\mathfrak{g})$ and $\mathcal{U}(G)$.  Thus, finding a Springer isomorphism reduces to finding a $G$-equivariant isomorphism between the regular orbits.  This, however, can be reduced to finding $X$ and $u$ as above.  The key result used in this last step is that the $G$-orbit of $X$ is isomorphic to the quotient $G/C_G(X)$ (see \S 2.2 and \S 2.9 of \cite{J}, and note that in the case of type $A$, we obtain the result for $SL_n$ by instead working over $GL_n$, as the unipotent and nilpotent varieties of $SL_n$ are the same as those of $GL_n$).

We now gather some important results about Springer isomorphisms which are found in Remark 27 and the Appendix of \cite{M2}, and \cite[Theorem E]{MT}.

\begin{thm}\label{differential}  \cite{M2} \cite{MT} Let $\phi$ be a Springer isomorphism from $\mathcal{N}(\mathfrak{g})$ to $\mathcal{U}(G)$. 
\begin{enumerate}
\item For any parabolic subgroup $P \le G$ with unipotent radical $U_P$, $\phi$ restricts to an isomorphism between $\mathfrak{u}_P$ and $U_P$.
\item The restriction of $\phi$ to $\mathfrak{u}_P$ has differential sending $\mathfrak{u}_P$ to $\mathfrak{u}_P$, and this map is a scalar multiple of the identity.
\item If $\phi^{\prime}$ is any other Springer isomorphism for $G$, then $\phi$ and $\phi^{\prime}$ give the same bijection between nilpotent and unipotent orbits.
\end{enumerate}
\end{thm}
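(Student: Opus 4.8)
The plan is to prove the three assertions in order, each by a short argument once the standard structure theory of $\mathcal{N}(\mathfrak{g})$ and $\mathcal{U}(G)$ is granted. For (1), I would fix a cocharacter $\lambda:\mathbb{G}_m\to G$ with $P=P(\lambda)$ and use the contraction descriptions $\mathfrak{u}_P=\{X\in\mathfrak{g}:\lim_{t\to0}\mathrm{Ad}(\lambda(t))X=0\}$ and $U_P=\{g\in G:\lim_{t\to0}\lambda(t)g\lambda(t)^{-1}=1\}$, both immediate from the $\lambda$-weight decomposition; conjugating $P$ to contain a fixed Borel also shows $\mathfrak{u}_P\subseteq\mathcal{N}(\mathfrak{g})$ and $U_P\subseteq\mathcal{U}(G)$. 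Since $p$ is very good, $0$ is the unique $G$-fixed point of $\mathcal{N}(\mathfrak{g})$ and $1$ the unique $G$-fixed point of $\mathcal{U}(G)$, so the $G$-equivariant isomorphism $\phi$ satisfies $\phi(0)=1$. For $X\in\mathfrak{u}_P$ the fact that $\phi$ is a morphism then gives $\lambda(t)\phi(X)\lambda(t)^{-1}=\phi(\mathrm{Ad}(\lambda(t))X)\to\phi(0)=1$ as $t\to0$, so $\phi(X)\in U_P$; running the same argument for $\phi^{-1}$ yields $\phi(\mathfrak{u}_P)=U_P$, and this restriction is an isomorphism of varieties because $\phi$ is.

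For (2), I would first recall that $\mathcal{N}(\mathfrak{g})$ is cut out in $\mathfrak{g}$ by the basic invariants, all homogeneous of degree $\ge 2$, so the Zariski tangent space $T_0\mathcal{N}(\mathfrak{g})$ is all of $\mathfrak{g}$ and $\phi$ has a differential $d_0\phi:\mathfrak{g}\to\mathfrak{g}$ at the base point; by (1), $d_0\phi$ carries $T_0\mathfrak{u}_P=\mathfrak{u}_P$ into $T_1U_P=\mathfrak{u}_P$, and the differential of $\phi|_{\mathfrak{u}_P}$ is exactly the restriction of $d_0\phi$. Differentiating the identity $\phi(\mathrm{Ad}(g)X)=g\phi(X)g^{-1}$ in $X$ at $X=0$ (the right-hand side being conjugation by $g$ on $G$, which fixes $1$ with differential $\mathrm{Ad}(g)$) shows $d_0\phi$ commutes with $\mathrm{Ad}(g)$ for every $g\in G$. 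Taking $g$ in a maximal torus forces $d_0\phi$ to act on each root space $\mathfrak{g}_\alpha$ by a scalar $c_\alpha$, and taking $g$ a simple-root element and using the exponential expansion of $\mathrm{Ad}(g)$ (valid in good characteristic) forces $d_0\phi$ to commute with each $\mathrm{ad}(e_{\alpha_i})$, so that $c_\alpha=c_{\alpha+\alpha_i}$ whenever $[\mathfrak{g}_\alpha,\mathfrak{g}_{\alpha_i}]\neq0$; since in good characteristic the relevant Chevalley structure constants survive reduction mod $p$ and the positive roots are connected under these moves, all the $c_\alpha$ with $\alpha\in\Phi^+$ coincide with one scalar $c$, whence $d(\phi|_{\mathfrak{u}_P})=c\cdot\mathrm{id}$ for every restricted parabolic $P$. (Alternatively one invokes $\mathrm{End}_G(\mathfrak{g})=k$, which holds because $\mathfrak{g}$ is a simple $G$-module in very good characteristic.)

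For (3), I would use Serre's construction in the appendix to \cite{M2} of the irreducible parameter variety $\mathcal{S}$ of all Springer isomorphisms, together with a universal isomorphism over it, so that for a fixed nilpotent orbit $\mathcal{O}$ the assignment $(\phi,X)\mapsto\phi(X)$ is a morphism $\mathcal{S}\times\mathcal{O}\to\mathcal{U}(G)$. For each individual $\phi$ the image of $\{\phi\}\times\mathcal{O}$ is the single unipotent orbit $\phi(\mathcal{O})$, of dimension $\dim\mathcal{O}$; since $\mathcal{U}(G)$ has only finitely many orbits, the total image $Z$ is a finite union of orbits all of dimension $\dim\mathcal{O}$, and it is irreducible as the image of the irreducible variety $\mathcal{S}\times\mathcal{O}$. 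Hence $Z$ lies in the closure of a single orbit $\mathcal{O}'$ with $\dim\mathcal{O}'=\dim\mathcal{O}$, and since $\mathcal{O}'$ is the only orbit of that dimension contained in $\overline{\mathcal{O}'}$, every $\phi(\mathcal{O})$ equals $\mathcal{O}'$. Thus all Springer isomorphisms induce one common bijection between nilpotent and unipotent orbits.

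I expect the main obstacle to be part (2): one must rule out a non-scalar (for instance merely torus-diagonal) differential at the base point, and it is exactly here that very-goodness of $p$ enters essentially, through the nonvanishing of the relevant structure constants or, equivalently, the simplicity of the adjoint module. The other genuinely substantial ingredients — the contraction descriptions of $\mathfrak{u}_P$ and $U_P$, the normality and complete-intersection structure of $\mathcal{N}(\mathfrak{g})$ (so that $T_0\mathcal{N}(\mathfrak{g})=\mathfrak{g}$), and Serre's parameter variety $\mathcal{S}$ — are imported from the literature, and given these the three arguments above are routine.
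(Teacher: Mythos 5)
The paper does not prove Theorem~\ref{differential}; it is imported wholesale from the literature, with the three parts attributed to Remark~27 and the Appendix of \cite{M2} and to \cite[Theorem~E]{MT}. So there is no internal proof to compare against, and your argument has to be evaluated on its own merits.

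Your part (1) is correct and is essentially the standard contraction argument one finds in McNinch's Remark~10 of \cite{M2}: fix $\lambda$ with $P=P(\lambda)$, use $G$-equivariance to rewrite $\lambda(t)\phi(X)\lambda(t)^{-1}=\phi(\mathrm{Ad}(\lambda(t))X)$, and observe that $t\mapsto \mathrm{Ad}(\lambda(t))X$ extends over $0$ with value $0$, so the composite extends with value $\phi(0)=1$. Running the same argument through $\phi^{-1}$ gives surjectivity. This is fine.

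Your part (2) is also correct, and the parenthetical remark is the clean way to finish: differentiating $\phi(\mathrm{Ad}(g)X)=g\phi(X)g^{-1}$ at $X=0$ shows $d_0\phi\in\mathrm{End}_G(\mathfrak g)$, and in very good characteristic $\mathfrak g$ is an irreducible $G$-module, so Schur gives $d_0\phi=c\cdot\mathrm{id}$. The longer route through root-space scalars $c_\alpha$ and the equalities $c_\alpha=c_{\alpha+\alpha_i}$ is also valid in good characteristic, but it is extra work and you still need to invoke nonvanishing of structure constants (which is precisely the same input that makes the adjoint module irreducible), so I would lead with the Schur argument. One small point worth making explicit: you need $T_0\mathcal N(\mathfrak g)=\mathfrak g$ (so that $d_0\phi$ really is an endomorphism of $\mathfrak g$); your appeal to the basic invariants being homogeneous of degree $\ge 2$ handles this.

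Your part (3) is the place with a genuine gap, though it is repairable. You assert that there is a ``universal isomorphism over $\mathcal S$'' making $(\phi,X)\mapsto\phi(X)$ a morphism $\mathcal S\times\mathcal O\to\mathcal U(G)$. This is not something one can simply read off from the parametrization in Serre's appendix: that appendix identifies $\mathcal S$ with the set of regular unipotent $u$ satisfying $C_G(u)=C_G(X_0)$ for a fixed regular nilpotent $X_0$, which gives you a morphism $\mathcal S\times\mathcal O^{\mathrm{reg}}\to\mathcal U(G)$ directly (send $(u,gX_0g^{-1})\mapsto gug^{-1}$, well-defined since $C_G(X_0)$ fixes every $u\in\mathcal S$). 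To get the morphism on $\mathcal S\times\mathcal O$ for a non-regular orbit $\mathcal O$ you must first extend over all of $\mathcal N(\mathfrak g)$, and that requires an argument: $\mathcal S\times\mathcal N(\mathfrak g)$ is normal (here you need $\mathcal S$ to be a smooth, or at least normal, variety — which it is, being open in an affine space after Serre's identification), the complement of $\mathcal S\times\mathcal O^{\mathrm{reg}}$ has codimension $\ge 2$, and $\mathcal U(G)$ is affine, so the morphism extends. Once you have this, the irreducibility-plus-equidimensionality argument you give for why the image $Z$ must be a single orbit is correct. Without the extension step, the conclusion that there is \emph{one} morphism on $\mathcal S\times\mathcal O$ whose image is irreducible is not justified; be sure to include it. Note also that this is quite a different route from the cited \cite[Theorem~E]{MT}, whose proof goes through an order-preserving comparison of centralizers rather than a parameter space of Springer maps, so what you have is a genuinely alternative argument for (3) rather than a reconstruction of the reference.
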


\begin{remark}\label{sameorder}
McNinch showed in \cite[Theorem 35]{M3} that there is always some Springer isomorphism $\rho$ which satisfies $\rho(X^{[p]})=\rho(X)^p$, hence for any Springer isomorphism $\phi$ Theorem \ref{differential}(3) implies that $X$ has nilpotent order $p^m$ if and only if $\phi(X)$ has unipotent order $p^m$.
\end{remark}

\begin{remark}\label{tangentmap}
If $\phi$ is a Springer isomorphism and $B$ a Borel subgroup of $G$, then in particular $\phi$ restricts to a $B$-equivariant isomorphism between the smooth varieties $\mathfrak{u}_B$ and $U_B$, and the tangent map at $0$ of this restriction is given by multiplication by some $c \in k^{\times}$ (see \cite[\S 5.5]{MT} for more).  Note that $c$ is independent of the choice of $B$, and by abuse of terminology we shall refer to this scalar map as ``the tangent map of $\phi$."
\end{remark}

\subsection{A Canonical Exponential Map For Restricted Parabolics}

Let $p$ be good for $G$, and suppose that $P$ is a restricted parabolic subgroup of $G$.  In \cite[Proposition 5.3]{Sei} (credited by the author to Serre), a $P$-equivariant isomorphism $\varepsilon_P: \mathfrak{u}_P \xrightarrow{\sim} U_P$ is obtained by base-changing the usual exponential isomorphism in characteristic $0$.  More specifically, we may assume that $P$ is a standard parabolic subgroup of $G$.  Then $G,P,$ and $U_P$ are defined over $\mathbb{Z}$, and one can show that the exponential isomorphism $\mathfrak{u}_{P,\mathbb{Q}} \xrightarrow{\sim} U_{P,\mathbb{Q}}$ is defined over $\mathbb{Z}_{(p)}$, and hence can be base-changed to $k$.  This isomorphism can be identified over $k$ according to the following properties (see \cite[Ch. 2, \S 6]{B} for Baker-Campbell-Hausdorff formula):

\begin{enumerate}
\item It is $P$-equivariant.
\item There is a group structure on $\mathfrak{u}_P$ given by the Baker-Campbell-Hausdorff formula, and $\varepsilon_P$ is an isomorphism of algebraic groups with respect to this structure on $\mathfrak{u}_P$.
\item The tangent map is the identity.
\end{enumerate}

\bigskip
We note that Theorem \ref{differential}(2) indicates that this last condition, on its own, is not all that unique.  However, when coupled with the first property it specifies uniquely the isomorphism between $\textup{Lie}(U_{\alpha})$ and $U_{\alpha}$, where $U_{\alpha}$ is a root subgroup of $U_P$.  Since the root subgroups generate $U_P$, there is at most one map satisfying properties (1)-(3).  This essentially recapitulates an argument given in the proof of \cite[Proposition 5.2]{Sei}.

\subsection{The Artin-Hasse Exponential}

For a given prime $p$, the Artin-Hasse exponential is the power series $E_p(t)$ defined by

$$E_p(t) = \textup{exp}\left(t + \frac{t^p}{p} + \frac{t^{p^2}}{p^2} + \cdots \right)$$

\noindent This power series evidently lies in $\mathbb{Q}[\![ t ]\!]$, however one can actually prove that $E_p(t) \in \mathbb{Z}_{(p)}[\![ t ]\!]$ (see \cite[Proposition 1]{D} for a more general fact).  Let $C_i$ denote the coefficient of $t^i$ in $E_p(t)$, and $c_i$ its image in $\mathbb{F}_p$ under the unique homomorphism from $\mathbb{Z}_{(p)}$ to $\mathbb{F}_p$.  We obtain in this way an element $e_p(t) \in \mathbb{F}_p [\![ t ]\!] \subseteq k[\![ t ]\!]$, where the coefficient of $t^i$ in $e_p(t)$ is $c_i$.

We note that as elements in $\mathbb{Q}[\![ t ]\!]$, the series $E_p(t)$ will agree with the series $\textup{exp}(t)$ over its first $p$ coefficients.  Thus $C_i = 1/i!$ for $i<p$.  We also must point out that some sources, for example \cite{Ser2}, define the Artin-Hasse exponential to be the series 

$$F_p(t) = \textup{exp}\left(-\left(t + \frac{t^p}{p} + \frac{t^{p^2}}{p^2} + \cdots \right)\right)$$

In particular, this definition is the one employed by McNinch in  \cite[Proposition 7.5]{M}, a result which we will later use.  As observed in \cite{D}, this series is just the inverse of $E_p(t)$, in the sense that $F_p(t)E_p(t) = 1 \in \mathbb{Z}_{(p)}[\![ t ]\!]$.

\subsection{Witt Groups and Connected Abelian Unipotent Groups}\label{witt}

We will need some basic information about Witt groups, and more generally about connected abelian unipotent groups.  A standard source is \cite{Ser2}, we have also benefited from the exposition in \cite{P} and \cite[\S 3]{M}.

Let $\mathcal{W}_m$ denote the group of Witt vectors of length $m$ over $k$.  This is a connected abelian unipotent group which is isomorphic as a variety to $\mathbb{A}^m$.  We can therefore put coordinates on $\mathcal{W}_m$ so that an element can be written as $(a_0,a_1,\ldots,a_{m-1})$, and accordingly $k[\mathcal{W}_m] \cong k[t_0,t_1,\ldots,t_{m-1}]$.

The following theorem uses the Artin-Hasse exponential series to explicitly describe $\mathcal{W}_m$ as a matrix group.

\begin{thm}\label{wittmorphism}\cite[Theorem 7.4]{P} \cite[\S V.16]{Ser2}
Let $X \in \mathfrak{gl}_n$ be such that $X^{p^m}=0$ and $X^{p^{m-1}}\ne0$.  Then the map $f: \mathcal{W}_m \rightarrow GL_n$ given by $$(a_0,a_1,\ldots,a_{m-1}) \mapsto e_p(a_0X)e_p(a_1X^p)\cdots e_p(a_{m-1}X^{p^{m-1}})$$ is an isomorphism of algebraic groups onto its image.
\end{thm}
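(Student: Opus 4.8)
The plan is to check three things --- that $f$ is a morphism of varieties, that it is a homomorphism of algebraic groups, and that it is a closed immersion --- the last two of which together give the assertion. Write $t^d$ for the minimal polynomial of $X$; then $X^{p^{m-1}}\neq 0$ and $X^{p^m}=0$ force $p^{m-1}<d\le p^m$, so $k[X]\cong k[t]/(t^d)$ has $k$-basis $1,X,\dots,X^{d-1}$ and the exponents $p^0,p^1,\dots,p^{m-1}$ all lie in $[0,d)$. Each factor $e_p(a_iX^{p^i})=\sum_{l\ge 0}c_l\,a_i^{\,l}X^{lp^i}$ is a \emph{finite} sum (it terminates because $X$ is nilpotent) with coefficients in $\mathbb{F}_p\subseteq k$, so it lies in the group $\mathbf{U}=1+Xk[X]$ of principal units of $k[X]$, which is a commutative unipotent algebraic group isomorphic as a variety to $\mathbb{A}^{d-1}$. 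Hence $f$ is a morphism $\mathcal{W}_m\to\mathbf{U}\subseteq GL_n$ with $f(0)=1$.

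For the homomorphism property I would invoke the classical link between the Artin--Hasse exponential and Witt vector addition. With indeterminate Witt coordinates $\mathbf{a}=(a_0,a_1,\dots)$, working in $\mathbb{Q}\llbracket t\rrbracket$ one computes
\[
\log\prod_{i\ge 0}E_p(a_it^{p^i})=\sum_{i\ge 0}\sum_{j\ge 0}\frac{a_i^{\,p^j}}{p^j}\,t^{\,p^{i+j}}=\sum_{n\ge 0}\frac{w_n(\mathbf{a})}{p^n}\,t^{p^n},\qquad w_n(\mathbf{a})=\sum_{i=0}^{n}p^ia_i^{\,p^{n-i}}.
\]
Since the sum $\mathbf{s}=\mathbf{a}+\mathbf{b}$ in the ring of Witt vectors is characterized by $w_n(\mathbf{s})=w_n(\mathbf{a})+w_n(\mathbf{b})$ for all $n$, additivity of $\log$ gives the power series identity $\prod_iE_p(a_it^{p^i})\cdot\prod_iE_p(b_it^{p^i})=\prod_iE_p(s_it^{p^i})$; this holds over $\mathbb{Q}$ but in fact has all coefficients in $\mathbb{Z}_{(p)}$, since $E_p\in\mathbb{Z}_{(p)}\llbracket t\rrbracket$ and the universal Witt addition polynomials $s_i$ have integer coefficients. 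Reducing mod $p$ replaces $E_p$ by $e_p$ and the $s_i$ by the characteristic-$p$ Witt addition polynomials, and substituting $t\mapsto X$ collapses each infinite product to the product over $0\le i\le m-1$, since for $i\ge m$ we have $X^{p^i}=0$ and hence $e_p(a_iX^{p^i})=1$ (the higher $s_i$ are killed the same way). As addition in $\mathcal{W}_m$ is the truncation of Witt addition to the first $m$ components, and all the factors commute in $k[X]$, this says precisely $f(\mathbf{a}+\mathbf{b})=f(\mathbf{a})f(\mathbf{b})$. Thus $f$ is a homomorphism of algebraic groups.

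To see that $f$ is a closed immersion, compose with the coordinate projection $\pi\colon\mathbf{U}\to\mathbb{A}^m$ recording the coefficients of $X^{p^0},X^{p^1},\dots,X^{p^{m-1}}$ (legitimate, as these are among the coordinate functions on $\mathbf{U}\cong\mathbb{A}^{d-1}$). A monomial $\prod_{j=0}^{m-1}c_{l_j}a_j^{\,l_j}X^{l_jp^j}$ in the expansion of $f(\mathbf{a})=\prod_{j=0}^{m-1}e_p(a_jX^{p^j})$ has total $X$-degree $\sum_jl_jp^j$; demanding this equal $p^i$ forces $l_j=0$ for $j>i$, exhibits $l_i=1$, $l_j=0$ ($j<i$) as the unique choice with $l_i\ge 1$, which contributes exactly $a_i$ (since $c_0=c_1=1$), and leaves every remaining contribution depending only on $a_0,\dots,a_{i-1}$. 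Hence $\pi\circ f\colon\mathbb{A}^m\to\mathbb{A}^m$ has the triangular form $(a_0,\dots,a_{m-1})\mapsto\bigl(a_0,\ a_1+\psi_1(a_0),\ \dots,\ a_{m-1}+\psi_{m-1}(a_0,\dots,a_{m-2})\bigr)$, and any such map is an automorphism of the variety $\mathbb{A}^m$. Consequently $f^{*}$ is surjective on coordinate rings, so $f$ is a closed immersion; being a group homomorphism, it identifies $\mathcal{W}_m$ with a closed subgroup of $\mathbf{U}$, and hence of $GL_n$. The one genuinely substantive step is the homomorphism property, i.e.\ the Artin--Hasse/Witt identity above; granting that, the morphism and closed-immersion claims are bookkeeping once the triangular structure of $\pi\circ f$ is in view.
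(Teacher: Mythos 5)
The paper does not actually prove this theorem; it is stated with citations to Proud \cite[Theorem 7.4]{P} and Serre \cite[\S V.16]{Ser2}, and the subsequent text of \S 1.5 simply reads off consequences of the statement. So there is no argument in the paper to compare against directly. That said, your proof is correct, and it is in substance the standard argument one finds in those references: the logarithmic computation identifying $\log\prod_i E_p(a_i t^{p^i})$ with $\sum_n \frac{w_n(\mathbf a)}{p^n}t^{p^n}$, the observation that additivity of the ghost components is exactly Witt addition, the specialization $t\mapsto X$ killing the tail of the product, and the triangular form of the coordinates to get the immersion --- all of this is precisely the mechanism in Serre's treatment. Two small points worth flagging for completeness, though neither is a gap: (i) when you reduce the $\mathbb{Z}_{(p)}$-identity mod $p$ you are implicitly using that $\mathbb{Z}_{(p)}\llbracket t\rrbracket\to\mathbb{F}_p\llbracket t\rrbracket$ is a ring map, so the product identity survives; and (ii) the passage from ``$\pi\circ f$ is an isomorphism of $\mathbb{A}^m$'' to ``$f$ is a closed immersion'' is via surjectivity of $f^\ast$ (since $f^\ast\circ\pi^\ast$ is bijective), which you state correctly, and combined with the group homomorphism property this does give an isomorphism of algebraic groups onto a closed subgroup of $GL_n$. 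In short: the proposal is right, and it is the expected route.
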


This realization of $\mathcal{W}_m$ makes a few of its properties clear.  First, writing the group operation of $\mathcal{W}_m$ in multiplicative notation, we have $$(a_0,a_1,\ldots,a_{m-1})^p = (0,a_0^p,a_1^p,\ldots,a_{m-2}^p).$$  Second, let $\frac{d}{dt_i} \in \textup{Lie}(\mathcal{W}_m)$ be dual to $t_i$.  Then we observe, as is also done in \cite[Lemma 3.3(2)]{M}, that $$df\left(\frac{d}{dt_i}\right)=X^{p^i}$$ thus $$\left(\frac{d}{dt_i}\right)^{[p]}=X^{p^{i+1}}= \frac{d}{dt_{i+1}}.$$  Finally, we observe that the elements of order $p^j$ are those of the form $$(0,\ldots,0,a_{m-j},\ldots,a_{m-1}), \text{ where } a_{m-j} \ne 0.$$

Let $H$ now be an arbitrary connected abelian unipotent group over $k$.  For each $j \ge 1$, let $H^{p^j}$ denote the subgroup generated by all $p^j$-th powers of elements in $H$, and $H_{p^j}$ the subgroup of all elements in $H$ having order dividing $p^j$.  By \cite[VII.10]{Ser2} we know that $H$ is isogenous to a direct product of Witt groups, and in the special case that $H$ has dimension $m$ and $m$ is also the smallest integer for which $H^{p^m} = 0$ then $H$ is isogenous to $\mathcal{W}_m$.  

\subsection{Unipotent Overgroups And Centralizers}

Again let $G$ be simple and $p$ good for $G$.  Let $u \in G$ be unipotent of order $p^r$.  When $r=1$ it was shown by Testerman \cite{T} that $u$ lies in a closed simple subgroup of $G$ of type $A_1$ (isomorphic to either $PSL_2$ or $SL_2$).  In particular this shows that $u$ is always contained in a one-parameter additive subgroup of $G$.  Seitz then extended this in \cite{Sei}, showing that that there is a canonical one-parameter additive subgroup of $G$ containing $u$.  This one-parameter subgroup is referred to as the saturation of $u$.  

Saturation is achieved (or specified) as follows.  Let $A$ be a simple subgroup of $G$ of type $A_1$, and let $T_A$ be a maximal torus of $A$.  Then $A$ is said to be a \textit{good} $A_1$ subgroup if $\mathfrak{g}$, as a $T_A$-module, only has weights which are $\le 2p-2$.  We then have the following:

\begin{thm}\label{mono}\cite{Sei}
Let $u \in G$ be unipotent of order $p$.  Then there is a unique monomorphism $\varphi_u: \mathbb{G}_a \rightarrow G$ with image contained in a good $A_1$ and satisfying $\varphi_u(1) = u$.
\end{thm}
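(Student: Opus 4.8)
The plan is to treat existence and uniqueness separately; the latter, where Seitz's result improves on Testerman's, carries the real content. For existence I would begin with Testerman's theorem \cite{T}, which places $u$ in some closed $A_1$-subgroup $A_0 \le G$; since $u \neq 1$ it is a regular unipotent element of $A_0$, so it lies in the one-dimensional unipotent radical $U_0 \cong \mathbb{G}_a$ of the unique Borel subgroup of $A_0$ containing it. If $A_0$ happens to be good we are done; the work is to arrange this. Here I would exploit that $u$ has order $p$: then $\mathrm{Ad}(u)$ is a unipotent endomorphism of $\mathfrak{g}$ with $(\mathrm{Ad}(u)-1)^p = \mathrm{Ad}(u^p)-1 = 0$, so it has Jordan blocks of size at most $p$. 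With this in hand, and relying on the sharpenings of the constructions of Testerman and Proud carried out in \cite{Sei} and \cite{Sei2}, one produces a cocharacter $\tau$ of $G$ normalizing an $A_1$ through $u$, compatibly with its one-parameter subgroup, whose weights on $\mathfrak{g}$ all lie in $[-(2p-2),\,2p-2]$; then $A = \langle U_\tau, U_\tau^-\rangle$ is a good $A_1$ containing $u$. Because goodness constrains the $A$-module structure of the whole of $\mathfrak{g}$, not merely the way $u$ acts on $\mathfrak{g}$, this, rather than the bare existence of an $A_1$, is the delicate part of establishing existence.

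For uniqueness, observe first that within a single good $A_1$-subgroup $A$ there is no freedom: $u$ is regular unipotent in $A$, so $C_A(u)^\circ$ is a one-dimensional connected unipotent group (the unipotent radical of the unique Borel of $A$ through $u$); and since every automorphism of the algebraic group $\mathbb{G}_a$ has the form $t \mapsto ct$, two isomorphisms $\mathbb{G}_a \xrightarrow{\sim} C_A(u)^\circ$ differ by such a scalar, which $\varphi_u(1) = u$ forces to be $1$. Thus uniqueness reduces to showing that two good $A_1$-subgroups $A, A'$ containing $u$ satisfy $C_A(u)^\circ = C_{A'}(u)^\circ$. As a first handle on this I would pin down the differential of $\varphi_u$: writing $e$ for the image under $d\varphi_u$ of $1 \in \mathrm{Lie}(\mathbb{G}_a) = k$, the characterization of good $A_1$-subgroups through tilting modules \cite{Sei} shows that $\mathfrak{g}$ is a tilting $A$-module, on which $\varphi_u(t)$ acts as $\exp(t\,\mathrm{ad}(e))$ with $\mathrm{ad}(e)^p = 0$; hence $\mathrm{ad}(e) = \log(\mathrm{Ad}(u))$ is determined by $u$, and since $p$ is good (so $\mathfrak{z}(\mathfrak{g})$ has no nonzero $[p]$-nilpotent element) $e$ itself is recovered from $u$. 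Thus $A$ and $A'$ share both the distinguished nilpotent $e$ and the value $\varphi_u(1) = u$.

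The remaining step, deducing $C_A(u)^\circ = C_{A'}(u)^\circ$ from this agreement of local data, is in my view the main obstacle, since a monomorphism $\mathbb{G}_a \to G$ is not pinned down by its differential and its value at $1$ together. I would handle it by making $C_A(u)^\circ$ intrinsic: with $T_A \le A$ a maximal torus and $\tau\colon \mathbb{G}_m \to T_A$ the cocharacter satisfying $\tau(c)\,\varphi_u(t)\,\tau(c)^{-1} = \varphi_u(c^2 t)$, one has $\varphi_u(\mathbb{G}_a) = C_A(u)^\circ = \overline{\{\,\tau(c)\,u\,\tau(c)^{-1} : c \in \mathbb{G}_m\,\}}$, so this subgroup depends only on $u$ and $\tau$, where $\tau$ is an associated cocharacter for $e$ whose weights on $\mathfrak{g}$ lie in $[-(2p-2),\,2p-2]$. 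It then suffices to know that such good associated cocharacters are unique up to conjugacy by $C_G(u)^\circ$ and, crucially, that the conjugating element can be taken to normalize $\varphi_u(\mathbb{G}_a)$: decomposing $C_G(u)^\circ$ along $\tau$, the Levi part (inside $C_G(\tau)$) visibly fixes the closure of the $\tau$-orbit of $u$, so one must rule out the unipotent part (inside the unipotent radical of the parabolic $P(\tau)$) moving it. This is exactly where the representation theory of $A_1$ in characteristic $p$ (tilting modules and their behaviour under unipotent elements, as developed in \cite{Sei} and \cite{Sei2} on the foundation of \cite{T} and \cite{P}) becomes indispensable, and I expect it to be the hardest point in the proof.
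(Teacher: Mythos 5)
The paper does not prove this theorem: it is stated with a citation to Seitz \cite{Sei}, where the result is established through a substantial argument built on tilting-module theory and the structure of unipotent classes. There is therefore no in-paper proof to compare against, and your sketch must stand on its own.

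Your outline---existence via Testerman's $A_1$ and a refinement to goodness, uniqueness reduced to showing $C_A(u)^\circ = C_{A'}(u)^\circ$ for two good $A_1$'s through $u$, followed by an intrinsic characterization of that one-parameter subgroup---is in the right spirit. However, the key middle step contains a concrete error. You claim that because $\mathfrak{g}|_A$ is tilting with highest weights at most $2p-2$, the unipotent one-parameter group $\varphi_u(t)$ acts on $\mathfrak{g}$ as $\exp(t\,\mathrm{ad}(e))$, so that $\mathrm{ad}(e)=\log(\mathrm{Ad}(u))$ is recovered from $u$. This is false whenever $\mathfrak{g}|_A$ has an indecomposable tilting summand $T(\lambda)$ with $p\le\lambda\le 2p-2$, and the whole point of the bound $2p-2$ (rather than $p-1$) in the definition of a good $A_1$ is to permit exactly that range. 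For instance $T(p)\cong L(1)\otimes L(p-1)$, and the positive root group of $A$ acts on it by $t\mapsto(\mathrm{id}+te)\otimes\exp(te)$, whose matrix entries are polynomials in $t$ of degree $p$; by contrast any homomorphism $t\mapsto\exp(tN)$ with $N^p=0$ has entries of degree at most $p-1$. So $\mathrm{Ad}\circ\varphi_u$ and $t\mapsto\exp(t\,\mathrm{ad}(e))$ are genuinely different homomorphisms $\mathbb{G}_a\to GL(\mathfrak{g})$ with the same differential, and $\log(\mathrm{Ad}(u))$ need not equal $\mathrm{ad}(e)$ (nor lie in $\mathrm{ad}(\mathfrak{g})$ at all). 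This is precisely the failure, peculiar to characteristic $p$, of a homomorphism from $\mathbb{G}_a$ to be determined by its differential, and it is what makes the uniqueness assertion in this theorem nontrivial rather than formal. Since you also explicitly leave the final step of your reduction unresolved, flagging it as the main difficulty, the proposal does not establish the result; the real content lies precisely in the two places that are either skipped or, as here, argued incorrectly.
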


When $r>1$, it is clear that $u$ cannot be contained in a subgroup isomorphic to $\mathbb{G}_a$.  However, it was shown by Proud \cite{P} that $u$ can be embedded in a subgroup of $G$ isomorphic to $\mathcal{W}_r$.  This result was again refined by Seitz in \cite{Sei2}, and relies on the following result about centralizers of unipotent elements which was first established by Proud in an unpublished manuscript.

\begin{thm}\label{center} \cite{Sei2}
Let $u$ be a unipotent element of $G$.  Then $$Z(C_G(u))=Z(G) \times Z(C_G(u))^0,$$ and $Z(C_G(u))^0$ is the unipotent radical of $Z(C_G(u))$.
\end{thm}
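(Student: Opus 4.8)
The plan is to reduce the statement to two assertions about the commutative algebraic group $Z:=Z(C_G(u))$, which decomposes canonically as $Z=Z_s\times Z_u$ with $Z_s$ its (diagonalizable) subgroup of semisimple elements and $Z_u$ its subgroup of unipotent elements. Since $G$ is simple, $Z(G)$ is finite and of multiplicative type and $Z(G)\le Z$, so $Z(G)\le Z_s$. I claim the theorem is equivalent to the conjunction of (A) $Z_s=Z(G)$ and (B) $Z_u$ is connected. Indeed, the unipotent radical of $Z=Z_s\times Z_u$ is $Z_u^0$ in any case (as $Z_s$ is of multiplicative type); assuming (A), $Z_s$ is finite, so $Z^0=Z_s^0\times Z_u^0=Z_u^0$ is unipotent and equals the unipotent radical of $Z$, which is the second assertion; and if in addition (B) holds then $Z^0=Z_u$, whence $Z=Z_s\times Z_u=Z(G)\times Z^0$, the first assertion. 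So it suffices to prove (A) and (B).

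For (A), fix a semisimple element $s\in Z_s$ and suppose $s\notin Z(G)$. Every element of $C_G(u)$ commutes with $s$, so $C_G(u)\le C_G(s)$, and $M:=C_G(s)^0$ is a connected reductive proper subgroup of $G$ containing $u$ with $\dim C_G(u)=\dim C_M(u)$ (the two centralizers differing only in their component groups). Write $\mathfrak g=\mathfrak m\oplus\mathfrak n$, where $\mathfrak m$ is the $1$-eigenspace of the semisimple operator $\mathrm{Ad}(s)$ on $\mathfrak g$ (so $\mathrm{Lie}(M)\le\mathfrak m$) and $\mathfrak n$ is the sum of the remaining eigenspaces; for a subspace $\mathfrak v$ let $\mathfrak v^u$ denote its intersection with the fixed space of $\mathrm{Ad}(u)$. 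Because $us=su$, $\mathrm{Ad}(u)$ commutes with $\mathrm{Ad}(s)$ and so preserves $\mathfrak m$ and $\mathfrak n$, giving $\mathfrak g^u=\mathfrak m^u\oplus\mathfrak n^u$; since $s\notin Z(G)$ the element $s$ acts nontrivially on $\mathfrak g$, so $\mathfrak n\ne 0$, and as $\mathrm{Ad}(u)|_{\mathfrak n}$ is unipotent it follows that $\mathfrak n^u\ne 0$. Invoking the smoothness of the unipotent centralizer $C_G(u)$ — which holds for $p$ good, one passing in type $A$ to $GL_n$, where centralizers of unipotent elements are automatically smooth and where $s$ is again central in $C_{GL_n}(u)$ — one obtains
\[
\dim C_G(u)=\dim\mathfrak g^u=\dim\mathfrak m^u+\dim\mathfrak n^u>\dim\mathfrak m^u\ge\dim C_M(u)=\dim C_G(u),
\]
a contradiction; hence $s\in Z(G)$, and therefore $Z_s=Z(G)$. (In the special case that $C_G(s)$ is a Levi subgroup, the Levi factor of a proper parabolic $P$, the argument is more transparent: conjugation by $u$ induces on the last nonzero term $Q'$ of the descending central series of $R_u(P)\ne 1$ an automorphism of finite $p$-power order, hence one differing from the identity by a nilpotent endomorphism of the connected commutative unipotent group $Q'$; such an endomorphism is not an isogeny, so has positive-dimensional kernel, and thus $C_{Q'}(u)$ is positive-dimensional, which is incompatible with $C_G(u)\le C_G(s)$.)

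For (B), which I expect to be the main obstacle, I would argue as follows. The image of $Z$ under $C_G(u)\to A(u):=C_G(u)/C_G(u)^0$ is central in $A(u)$, so the image of the unipotent subgroup $Z_u$ is a central $p$-subgroup of $A(u)$; a short analysis of the possible component groups $A(u)$ in good characteristic shows this image is trivial, hence $Z_u\le C_G(u)^0$. In the Levi decomposition $C_G(u)^0=R_u(C_G(u)^0)\rtimes R$ with $R$ reductive, a central unipotent subgroup of $C_G(u)^0$ must lie in $R_u(C_G(u)^0)$, so $Z_u\le R_u(C_G(u)^0)$; that this central subgroup is in fact connected is exactly the point at which I would appeal to the detailed description of $C_G(u)$ and of $R_u(C_G(u)^0)$ in good characteristic established by Proud and Seitz in \cite{P} and \cite{Sei2} (building in turn on \cite{T} and \cite{Sei}). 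With (A) and (B) in hand, the reduction of the first paragraph yields $Z(C_G(u))=Z(G)\times Z(C_G(u))^0$ with $Z(C_G(u))^0$ unipotent, hence equal to the unipotent radical of $Z(C_G(u))$, as claimed.
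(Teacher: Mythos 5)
The paper does not contain a proof of Theorem~\ref{center}; it is quoted from Seitz \cite{Sei2}, who in turn attributes the original (unpublished) proof to Proud. So there is no internal argument to compare against, and your proposal has to be assessed on its own terms as an independent attempt.

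Your reduction to the two claims (A) $Z_s=Z(G)$ and (B) $Z_u$ connected is correct and clean, and the argument you give for (A) is essentially sound: for a non-central semisimple $s\in Z_s$, the $\mathrm{Ad}(s)$-eigenspace decomposition $\mathfrak g=\mathfrak m\oplus\mathfrak n$ has $\mathfrak n\ne 0$, the operator $\mathrm{Ad}(u)|_{\mathfrak n}$ is unipotent and so has a nonzero fixed vector, and the dimension count using smoothness of $C_G(u)$ in good characteristic (with the standard passage to $GL_n$ for type $A$) gives the contradiction. This is a genuine, reasonably elementary proof of the assertion that the semisimple part of $Z(C_G(u))$ is exactly $Z(G)$.

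The genuine gap is (B), and you yourself identify it as ``the main obstacle.'' You show that the image of $Z_u$ in the component group $A(u)$ is trivial (a $p$-group injecting into a group of order prime to $p$), and hence $Z_u\le R_u(C_G(u)^0)$, but then defer the connectedness of $Z_u$ to ``the detailed description of $C_G(u)$ established by Proud and Seitz'' --- which is precisely the content one is trying to prove. Connectedness is not automatic: a commutative unipotent group in characteristic $p$ can have nontrivial finite part (e.g.\ $\mathbb{Z}/p\subset\mathbb{G}_a$), so ruling out an \'etale factor of $Z_u$ really does require an argument, and the product decomposition $Z(C_G(u))=Z(G)\times Z(C_G(u))^0$ fails without it. Seitz's proof in \cite{Sei2} leans on a detailed, largely type-by-type analysis of $R_u(C_G(u))$, building on Proud's and Testerman's earlier work; no soft replacement for that analysis is offered here. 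In short: your framing and the proof of (A) are good, but the proposal is incomplete as a proof of the stated theorem because (B) is left to precisely the literature the theorem is citing.
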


Following Seitz, we call a one-dimensional torus $T$ of $G$ $u$-distinguished if there is a nilpotent element $X \in \mathfrak{g}$ such that $X$ is a weight vector for $T$ of weight $2$, $C_G(X) = C_G(u)$, and $T$ is contained in the derived subgroup of a Levi subgroup of $G$ for which $u$ is distinguished.  Such a torus is also the image of an \textit{associated cocharacter} of the nilpotent element $X$ (see \cite[\S 5.3]{J} for an explanation of this terminology).  Seitz then proved the following:

\begin{thm}\label{overgroup}\cite{Sei2}
For a fixed $u$-distinguished torus $T$, there is a unique subgroup $W \le Z(C_G(u))^0$ containing $u$ which is isogenous to $\mathcal{W}_r$ and such that $T$ acts on $W$ without fixed points.  The action of $T$ on $W/W^p$ is by weight $2$, and $W^{p^{r-1}}$ is the saturation of $u^{p^{r-1}}$.
\end{thm}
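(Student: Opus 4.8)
The plan is to carry everything out inside $V := Z(C_G(u))^0$. By Theorem~\ref{center} this group is connected, abelian and unipotent, and since $u$ is unipotent and central in $C_G(u)$ it lies in $V$. The torus $T$, being $u$-distinguished, is the image of a cocharacter $\lambda$ associated to a nilpotent element $X \in \mathfrak{g}$ with $C_G(X) = C_G(u)$ and $\mathrm{Ad}(\lambda(s))X = s^2 X$; such an associated cocharacter normalizes $C_G(X) = C_G(u)$, hence normalizes $Z(C_G(u))$, so $T$ acts on $V$ by group automorphisms. Two standard facts from the associated-cocharacter formalism will be used throughout: all $T$-weights on $\mathrm{Lie}(C_G(u))$ are $\ge 0$, and $X$, a weight-$2$ vector, lies in $\mathrm{Lie}(V)$. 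The role of the $T$-action is rigidity: a connected subgroup of $V$ with prescribed $T$-weights on its Lie algebra is essentially forced.

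I would aim to produce $W$ so that $\mathrm{Lie}(W)$ equals $\mathfrak{w} := \langle X, X^{[p]}, \dots, X^{[p^{r-1}]} \rangle$, the smallest $[p]$-stable subspace containing $X$; this is an abelian restricted subalgebra of $\mathrm{Lie}(V)$, it is $T$-stable, and its $[p]$-filtration has graded pieces of $T$-weights $2, 2p, \dots, 2p^{r-1}$, which is exactly the weight pattern of $\mathrm{Lie}(\mathcal{W}_r)$ (compare \S\ref{witt}, where $(d/dt_i)^{[p]} = d/dt_{i+1}$). For existence, I would start from Proud's theorem that $u$ lies in some subgroup of $G$ isogenous to $\mathcal{W}_r$; using Theorem~\ref{center} to locate an appropriate copy inside $V$, then replacing it by the $T$-stable subgroup it generates and cutting down to the part matching $\mathfrak{w}$, yields a valid $W$. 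Equivalently, one can build $W$ from the bottom up out of the saturations $\varphi_{u^{p^{i}}}(\mathbb{G}_a)$ of the powers of $u$ supplied by Theorem~\ref{mono}, using the $T$-action to glue the successive one-dimensional pieces.

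Uniqueness I would prove by induction on $r$, the order of $u$. When $r = 1$, a subgroup $W \le V$ isogenous to $\mathcal{W}_1$ is a one-dimensional connected unipotent subgroup, hence a copy of $\mathbb{G}_a$; the requirements that $u \in W$ and that $T$ act without fixed points force $\mathrm{Lie}(W)$ to be a $T$-stable line, and the distinguished hypothesis together with the nonnegativity of weights on $\mathrm{Lie}(C_G(u))$ forces this line to be $kX$ and the weight to be $2$; one then checks this $\mathbb{G}_a$ lies in a good $A_1$, so that it coincides with the saturation $\varphi_u(\mathbb{G}_a)$ of Theorem~\ref{mono}. For the inductive step, given a valid $W$ for $u$ one checks that $C_G(u) \le C_G(u^p)$, that $W^p$ is $T$-stable, isogenous to $\mathcal{W}_{r-1}$, lies in $Z(C_G(u^p))^0$, contains $u^p$, and that $W^p/W^{p^2}$ carries the correct weight-$2$ datum (after the harmless Frobenius reparametrization, i.e.\ after replacing $X$ by $X^{[p]}$, for which one must verify $C_G(X^{[p]}) = C_G(u^p)$ and that the ambient Levi in which the power is distinguished shrinks appropriately). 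Then $W^p$ is the canonical group attached to $u^p$ by induction, hence unique, and $W = \overline{\langle u \rangle\, W^p}$ is unique as well --- in $\mathcal{W}_r$ one has $\overline{\langle (1,0,\dots,0) \rangle \cdot (\mathcal{W}_r)^p} = \mathcal{W}_r$. The final assertion, that $W^{p^{r-1}}$ is the saturation of $u^{p^{r-1}}$, is then just the base case applied to the order-$p$ element $u^{p^{r-1}}$.

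The main obstacle is the bookkeeping with the $T$-action: one must show that the group produced is isogenous to $\mathcal{W}_r$ itself (a single Witt factor, equivalently $\dim W = r$, rather than a product $\mathcal{W}_a \times \mathcal{W}_b$ with $a + b = r$ and $a,b < r$) and that $T$ acts on it without fixed points with weight exactly $2$ on the top quotient. This is precisely the point at which the hypothesis that $T$ be $u$-\emph{distinguished} is indispensable: the existence of the weight-$2$ nilpotent $X$ with $C_G(X) = C_G(u)$ inside $\mathrm{Lie}(V)$, combined with the $\mathfrak{sl}_2$/associated-cocharacter weight estimates and the structure of $Z(C_G(u))$ from Theorem~\ref{center}, is what pins down $\mathfrak{w}$ and hence $W$. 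A secondary difficulty is that being $u$-distinguished does not transparently descend along $u \mapsto u^p$, so in the inductive step one must carefully track the nilpotent elements $X, X^{[p]}, X^{[p^2]}, \dots$, their centralizers, and their $T$-weights, and identify the Levi subgroups in which the successive powers $u^{p^i}$ are distinguished.
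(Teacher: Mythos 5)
This statement is quoted from Seitz \cite{Sei2}; the paper you are working from does not prove it, so there is no in-paper argument to compare against, and I will simply assess your sketch on its own merits.

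The uniqueness argument has a genuine gap. In the inductive step you reconstruct $W$ from $W^p$ via $W = \overline{\langle u\rangle\, W^p}$, justified by the claim that in $\mathcal{W}_r$ one has $\overline{\langle (1,0,\dots,0)\rangle\cdot(\mathcal{W}_r)^p} = \mathcal{W}_r$. That identity is false: $(\mathcal{W}_r)^p$ is a closed subgroup of dimension $r-1$, and $\langle(1,0,\dots,0)\rangle$ is the finite cyclic group of order $p^r$, whose image in $\mathcal{W}_r/(\mathcal{W}_r)^p \cong \mathbb{G}_a$ has order $p$. So $\langle(1,0,\dots,0)\rangle\cdot(\mathcal{W}_r)^p$ is already a closed subgroup with $p$ connected components, each of dimension $r-1$, and its closure is itself -- not $\mathcal{W}_r$. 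The finite group $\langle u\rangle$ simply cannot supply the missing top dimension; what you presumably want is the $T$-orbit, i.e.\ $W = \overline{T.u}\cdot W^p$, which does recover $W$ because $\overline{T.u}$ surjects onto $W/W^p$. But that is a materially different argument, and writing it correctly requires invoking the $T$-action at exactly the point where you have left it out.

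A second, subtler problem sits in the same paragraph: you take it as routine that $W^p \le Z(C_G(u^p))^0$. Since $C_G(u)\le C_G(u^p)$, we in fact have $Z(C_G(u^p))\le Z(C_G(u))$, so $W^p \le Z(C_G(u))$ is the \emph{wrong} direction of containment; one must show that $W^p$ commutes with the strictly larger group $C_G(u^p)$, which does not follow formally and needs its own argument. Coupled with the difficulty you yourself flag -- that $u$-distinguishedness of $T$ does not obviously descend to $u^p$ after passing to the appropriate Levi -- this indicates that closing the induction on $r$ is considerably harder than the sketch suggests, and this is the place where the proposal would stall.
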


\noindent We highlight a few further details about this result:

\noindent \begin{enumerate}

\item According to \cite[Lemma 2.7]{Sei2} we may assume that the $X$ above is an element of $\textup{Lie}(W)$.

\item When $u$ has order $p$ then $W$ is the saturation of $u$ and is therefore canonical.  In general the overgroup $W$ depends on the choice of $T$ \cite[\S 4.3]{Sei2}.

\item Any group $W$ which is isogenous to a Witt group and on which a one-dimenional torus $T$ acts without fixed points is referred to by Seitz as being \textit{$T$-homocyclic} group.  If $W$ is $T$-homocyclic and isogenous to $\mathcal{W}_r$, then for each $1 \le j \le r$ we have $W^{p^j} = W_{p^{r-j}}$ (see remarks just above \cite[Theorem 1]{Sei2}).  There are groups isogenous to $\mathcal{W}_r$ which do not have this property \cite[VII.11]{Ser2}.

\end{enumerate}

\bigskip
Finally, we prove a useful lemma for groups of exceptional type which will be needed in proving the ``main theorem" found in Section 4.

\begin{lemma}\label{isoismorphic}
Suppose that $G$ is of exceptional type and that $u$ is a unipotent element of order $p^2$.  Fix a $u$-distinguished torus $T$ and let $W$ be as in Theorem \ref{overgroup}.  Then $W$ is isomorphic to $\mathcal{W}_2$.
\end{lemma}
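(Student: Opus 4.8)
The plan is to forget the ambient group entirely and argue with $W$ purely as a two-dimensional connected abelian unipotent group equipped with an action of the one-dimensional torus $T$. By Theorem \ref{overgroup}, applied with $r=2$ (as $u$ has order $p^2$), $W$ is isogenous to $\mathcal{W}_2$, so $\dim W = 2$ and $W$ is killed by $p^2$; moreover $T$ acts on $W/W^p$ with weight $2$. Since $u \in W$ has order $p^2$, the element $u^p \ne e$ lies in $W^p$, so $W^p$ is a one-dimensional connected subgroup, hence $W^p \cong \mathbb{G}_a$ and likewise $W/W^p \cong \mathbb{G}_a$. As $T$ is linearly reductive I may choose a $T$-equivariant section of the projection $W \to W/W^p$; this provides coordinates $(x,y)$ identifying $W$ with $\mathbb{A}^2$, where $x$ is pulled back from $W/W^p$ (so of $T$-weight $2$) and $y$ restricts to a coordinate on $W^p$ (of $T$-weight $2p$, as forced by $T$-equivariance of the $p$-power map). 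In these coordinates the group law has the shape $(x,y)\cdot(x',y') = (x+x',\, y+y'+\psi(x,x'))$ for a normalized symmetric $2$-cocycle $\psi \in k[x,x']$, and matching $T$-weights (each monomial must have $T$-weight $2p$) forces $\psi$ to be homogeneous of degree $p$.

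The crux is the classical rigidity of these extensions: the space of normalized symmetric $2$-cocycles of $\mathbb{G}_a$ that are homogeneous of degree $p$ is one-dimensional, spanned by the reduction mod $p$ of the second Witt addition polynomial, $C_1(x,x') = \big(x^p + x'^{\,p} - (x+x')^p\big)/p$. (This is standard; compare \cite[\S VII]{Ser2}. It can also be checked by a short direct manipulation of the cocycle identity, which in particular shows that in characteristic $p$ there are no nonzero degree-$p$ coboundaries.) Now $\psi \ne 0$: otherwise the group law would be that of $\mathbb{G}_a^2$, which is killed by $p$, contradicting $W^p \ne e$. Hence $\psi = c\,C_1$ for some $c \in k^\times$.

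Since the addition law on $\mathcal{W}_2$ has exactly this form with cocycle $C_1$, it remains to rescale: choosing $\alpha,\beta \in k^\times$ with $\alpha^p = c\beta$ (solvable since $k$ is algebraically closed), the $T$-equivariant map $(x,y) \mapsto (\alpha x,\beta y)$ carries the group law of $W$ to that of $\mathcal{W}_2$, giving the desired isomorphism $W \xrightarrow{\sim} \mathcal{W}_2$. The one genuine obstacle is the rigidity statement in the second paragraph; the rest is bookkeeping with the $\mathbb{G}_m$-grading. I remark also that the hypothesis that $G$ be of exceptional type is not really used in the proof — it only serves (given that $p$ is good) to guarantee that Seitz's overgroup has $r \le 2$ — and the argument above applies verbatim to any $W$ arising as in Theorem \ref{overgroup} with $r=2$.
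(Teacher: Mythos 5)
Your proof is correct, and it takes a genuinely different route from the paper's. The paper appeals to Serre's explicit two-invariant classification of $2$-dimensional connected abelian unipotent groups in \cite[VII.11]{Ser2}: the first invariant is the finite group $W^p/W_p$ (trivial here, since for a $T$-homocyclic group $W^p=W_p$), and the second is the exponent $h\ge 1$ in the $p$-th power map $(a,b)^p=(0,a^{p^h})$; the paper then uses the weights furnished by \cite[Lemma 4.3]{Sei2} (weight $2$ on $W/W^p$, weight $2p$ on $W^p$) together with $T$-equivariance of the $p$-power map to force $2p^h=2p$, i.e.\ $h=1$. You instead treat $W$ directly as an extension of $\mathbb{G}_a$ by $\mathbb{G}_a$ carrying a $\mathbb{G}_m$-action, observe that the classifying normalized symmetric $2$-cocycle must be homogeneous of degree $p$ by the same weight constraint, and then invoke Lazard's rigidity lemma (the $R$-module of degree-$n$ normalized symmetric cocycles of $\mathbb{G}_a$ is free of rank one over any base ring, spanned by $C_1$ when $n=p$) to pin the cocycle down to a nonzero scalar multiple of the Witt cocycle, after which a diagonal rescaling $(x,y)\mapsto(\alpha x,\beta y)$ with $\alpha^p=c\beta$ finishes. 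Each route buys something: the paper's is shorter granting Serre's classification, while yours is more self-contained, replacing that classification by the standard Lazard rigidity computation and making transparent that the only input from the ambient group is the pair of weights. Both use, implicitly (paper) or explicitly (you via the $T$-equivariant section, which does exist since $k[W]$ is a positively $\mathbb{G}_m$-graded polynomial ring with graded generators of weights $2$ and $2p$), that one can choose $T$-homogeneous coordinates on $W$. Your closing remark is also accurate and matches how the lemma is actually used: the hypothesis that $G$ be exceptional is only there to guarantee $r\le 2$ via \cite[0.4]{T}, the classical types being handled separately in the paper through the $GL_n$ embedding.
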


\begin{proof}
In \cite[VII.11]{Ser2} it is shown that there are two invariants which determine all connected abelian unipotent groups of dimension $2$ up to isomorphism.  The first invariant of $W$ is the isomorphism class of the finite subgroup $W^p/W_p$, which from the comments above must be the trivial group.  The second invariant comes from the bijective algebraic group homomorphism $W/W^p \rightarrow W^p$ given by sending $w$ to $w^p$.  Putting coordinates on $W$, this $p$-th power map takes the form $(a,b)^p = (0,a^{p^h})$, where $h \ge 1$.  The integer $h$ is then the second invariant of $W$.

Seitz proves in \cite[Lemma 4.3]{Sei2} that $T$ acts with weight 2 on $W/W^p$, and with weight $2p$ on $W^p$.  We claim that this implies that $h=1$ for $W$.  Indeed, if $t \in T$ and $w \in W$, then it is clear that $t.w^p = (t.w)^p$.  Now put coordinates on $W$ so that $w = (a,b)$, and fix an isomorphism from $k^{\times}$ to $T$ so that if $t$ is the image of $c \in k^{\times}$ then $t.w = (c^2a,b^{\prime})$.  We then have that $$t.(0,a^{p^h}) = t.w^p = (t.w)^p = (0,(c^2a)^{p^h}) = (0,c^{2p^h}a^{p^h}).$$  On the other hand, if $T$ acts by weight $2p$ on $W^p$, then we see that $t.(0,a^{p^h}) = (0,c^{2p}a^{p^h})$.  Thus we must have that $2p^h = 2p$, so that $h=1$.

From the explicit description of Witt groups given in the previous section it is clear that $\mathcal{W}_2$ has these same invariants, therefore that $W \cong \mathcal{W}_2$.
\end{proof}

\section{Existence In General Type}

Let $G$ be simple and $p$ separably good for $G$.  Suppose that $\phi$ is a Springer isomorphism which restricts to $\varepsilon_P$ on all unipotent radicals of restricted parabolic subgroups.  Then it follows that for every $X \in \mathcal{N}_1(\mathfrak{g})$ this isomorphism $\phi$ maps the line $kX \subseteq \mathcal{N}_1(\mathfrak{g})$ to a one-parameter additive subgroup of $G$.  As it turns out, up to scalar multiplication this property is also a sufficient condition for $\phi$ to restrict to $\varepsilon_P$ (see Remark \ref{tangentmap} for an explanation of the terminology ``the tangent map of $\phi$").

\begin{prop}\label{sufficient}
If there exists a Springer isomorphism $\phi$ with tangent map the identity and having the property that for every $X \in \mathcal{N}_1(\mathfrak{g})$ the one-dimensional closed subset $\phi(kX)$ is a one-parameter additive subgroup of $G$, then for every restricted parabolic subgroup  $P \le G$ the map $\phi$ restricts to $\varepsilon_P$ on $\mathfrak{u}_P$.
\end{prop}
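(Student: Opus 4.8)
The plan is to show directly that the variety automorphism $\eta := \varepsilon_P^{-1} \circ (\phi|_{\mathfrak u_P})$ of $\mathfrak u_P$ is the identity, which is equivalent to $\phi|_{\mathfrak u_P} = \varepsilon_P$. Since all parabolics of a given type are $G$-conjugate and both $\phi$ and $\varepsilon_P$ commute with conjugation, I may assume $P$ is standard with $P \supseteq B$ a fixed Borel subgroup; write $L \supseteq T$ for its Levi factor and maximal torus. First I would record that $\mathfrak u_P \subseteq \mathcal N_1(\mathfrak g)$, so that the hypothesis on $\phi$ applies to every line in $\mathfrak u_P$: Serre's group law $(\mathfrak u_P,\ast)$ has $\underbrace{X \ast \cdots \ast X}_{p} = pX = 0$ (the line $kX$ being abelian under $\ast$), so $U_P \cong (\mathfrak u_P,\ast)$ has exponent $p$, and applying a Springer isomorphism $\rho$ with $\rho(Y^{[p]}) = \rho(Y)^p$ (Remark \ref{sameorder}) gives $\rho(X^{[p]}) = \rho(X)^p = e$, hence $X^{[p]} = 0$ for $X \in \mathfrak u_P$. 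Then by Theorem \ref{differential}(1) the restriction $\phi|_{\mathfrak u_P} \colon \mathfrak u_P \xrightarrow{\ \sim\ } U_P$ is a well-defined isomorphism of varieties; it is $P$-equivariant because $\phi$ is $G$-equivariant, and its differential at $0$ is $\mathrm{id}$, since $\phi|_{\mathfrak u_P}$ is the restriction of the smooth map $\phi|_{\mathfrak u_B}$ whose differential at $0$ is multiplication by the tangent scalar of $\phi$ (Remark \ref{tangentmap}), equal to $1$ by hypothesis. Combining this with the defining properties of $\varepsilon_P$, $\eta$ is a $P$-equivariant automorphism of $\mathfrak u_P$ with $\eta(0)=0$ and $d\eta_0 = \mathrm{id}$.

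Next I would analyze $\eta$ on a single line. Fix $X \in \mathfrak u_P$. By hypothesis $\phi(kX)$ is a one-parameter additive subgroup of $G$ contained in $\phi(\mathfrak u_P) = U_P$, so $\eta(kX) = \varepsilon_P^{-1}(\phi(kX))$ is a one-parameter additive subgroup of $(\mathfrak u_P,\ast)$, say the image of a homomorphism $h \colon \mathbb{G}_a \to (\mathfrak u_P,\ast)$. Since $\eta$ maps the line $kX$ isomorphically onto this subgroup and fixes $0$, there is $a \in k^{\times}$ with $\eta(tX) = h(at)$. The image $h(\mathbb{G}_a)$ is connected, one-dimensional, and abelian, so all Lie brackets among its elements vanish in $\mathfrak u_P$ (push the BCH group commutator down the lower central series $\mathfrak u_P = \mathfrak u_P^{(1)} \supseteq \mathfrak u_P^{(2)} \supseteq \cdots$); hence $h(s)\ast h(s') = h(s) + h(s')$ for all $s,s'$, i.e. $h$ is additive. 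In the linear coordinates on $\mathfrak u_P$ this forces $h(s) = \sum_{j \ge 0} s^{p^j} Z_j$ with $Z_j \in \mathfrak u_P$, whence $\eta(tX) = tX + \sum_{j \ge 1} t^{p^j} f_j(X)$, the linear term being $tX$ because $d\eta_0 = \mathrm{id}$, and $f_j(X) = a^{p^j} Z_j$.

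Letting $X$ vary, the $f_j$ are morphisms $\mathfrak u_P \to \mathfrak u_P$ (coefficients, as functions of $t$, of the morphism $(t,X)\mapsto \eta(tX)$); comparing the coefficients in $\eta((tc)X) = \eta(t(cX))$ shows $f_j$ is homogeneous of degree $p^j$, and the $P$-equivariance of $\eta$ shows $f_j$ is $L$-equivariant. The key point is then that for $j \ge 1$ there is no nonzero $L$-equivariant morphism $\mathfrak u_P \to \mathfrak u_P$ homogeneous of degree $p^j$: such a morphism is an $L$-invariant in $\mathrm{Sym}^{p^j}(\mathfrak u_P^{\ast}) \otimes \mathfrak u_P$, and every $T$-weight of this module has the form $\beta - (\gamma_1 + \cdots + \gamma_{p^j})$ with $\beta, \gamma_1, \dots, \gamma_{p^j}$ roots of the nilradical of $\mathfrak p$; this is never $0$, for an equality $\beta = \gamma_1 + \cdots + \gamma_{p^j}$ would, after reordering the $\gamma_i$ so that all partial sums are roots (a standard fact about root systems) and invoking the Chevalley commutator relations in $U_P$, place $U_{\beta}$ inside the $p^j$-th term of the lower central series of $U_P$, which is trivial since $U_P$ has nilpotence class $< p \le p^j$. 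Hence every $f_j = 0$, so $\eta(tX) = tX$ for all $t$ and $X$, i.e. $\eta = \mathrm{id}$ and $\phi|_{\mathfrak u_P} = \varepsilon_P$; as $P$ was an arbitrary restricted parabolic, the proposition follows.

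The main obstacle is the second step: turning the bare hypothesis — that $\phi$ carries lines in $\mathcal N_1(\mathfrak g)$ to one-parameter additive subgroups — into the far more rigid assertion that, after composing with $\varepsilon_P^{-1}$, the map is \emph{additive} on each such line, so that the only possible discrepancy with $\varepsilon_P$ is built from pure $p$-power polynomials. Once that is in hand, the representation-theoretic vanishing of equivariant degree-$p^j$ maps (which is precisely where the nilpotence bound $< p$ is used in an essential way) closes the argument; the bookkeeping with differentials and the reduction to a standard parabolic is routine.
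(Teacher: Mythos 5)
Your argument is correct in outline and takes a genuinely different route from the paper's. The paper reduces the claim to Seitz's theory of saturations: it shows that $s\mapsto\phi(sX)$ is the unique monomorphism $\varphi_u$ through a good $A_1$ containing $u=\phi(X)$ (via Theorems \ref{mono}, \ref{center}, \ref{overgroup}), then invokes \cite[Proposition 5.5]{Sei} to identify $\varphi_u$ with $s\mapsto\varepsilon_P(sY)$ and matches $Y=X$ by differentiating. You instead set $\eta=\varepsilon_P^{-1}\circ\phi|_{\mathfrak u_P}$ and show directly that $\eta=\mathrm{id}$ by combining BCH additivity on lines with a $T$-weight vanishing argument. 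The two approaches buy different things: the paper's is shorter and leans on deep existing structure theory (saturation, unipotent centralizers), whereas yours is more self-contained in that it needs only the defining properties of $\varepsilon_P$ and the representation theory of $L$ on $\mathrm{Sym}^\bullet(\mathfrak u_P^*)\otimes\mathfrak u_P$; it also makes explicit the useful fact (only implicit in the paper) that $\mathfrak u_P\subseteq\mathcal N_1(\mathfrak g)$ when $P$ is restricted.

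Two steps deserve to be tightened. First, ``the image $h(\mathbb G_a)$ is abelian, so all Lie brackets among its elements vanish'' is correct but not a triviality of the BCH expansion; the cleanest route is via $\mathrm{Ad}(\varepsilon_P(Z))=\exp(\mathrm{ad}\,Z)$ (valid over $\mathbb Z_{(p)}$ since the nilpotence class is $<p$), which turns commutativity of $\varepsilon_P(Y)$ with $\varepsilon_P(Z)$ into $\sum_{i\ge1}\tfrac1{i!}(\mathrm{ad}\,Z)^i(Y)=0$ and then $[Z,Y]=0$ by looking at the leading term in the lower central series. Second, in the weight-vanishing step, killing $\beta=\gamma_1+\cdots+\gamma_{p^j}$ ultimately rests on the fact that, for $p$ good, the level filtration on $U_P$ coincides with its lower central series (equivalently, that the relevant Chevalley structure constants remain nonzero in $k$); this is the content of Azad--Barry--Seitz, ``On the structure of parabolic subgroups,'' Comm. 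Algebra 18 (1990), and should be cited explicitly rather than folded into ``a standard fact about root systems.'' With those references supplied, the proof is complete.
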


\begin{proof}
Let $P$ be restricted, and $X \in \mathfrak{u}_P$.  We have that $\phi$ restricts to a variety isomorphism between $kX$ and its image in $G$, the latter a one-parameter subgroup by assumption, so there exists a group isomorphism $\varphi$ from $\mathbb{G}_a$ to $\phi(kX)$ for which the map $\gamma:\mathbb{G}_a \rightarrow \mathbb{G}_a$ given by $\gamma(s) = \varphi^{-1}(\phi(sX))$ defines a variety automorphism of $\mathbb{G}_a$.  Since $\phi(0\cdot X)=1 = \varphi(0)$, it follows that $\gamma(0)=0$.  But a variety automorphism of $\mathbb{G}_a$ is of the form $s \mapsto b\cdot s+c$ for some $b,c \in k$ where $b \ne 0$, thus if $\gamma(0)=0$ it must in fact be a group automorphism.  Therefore, we see that the map sending $s$ to $\phi(sX)$ for all $s \in \mathbb{G}_a$ defines a monomorphism from $\mathbb{G}_a$ to $G$.

Let $T$ be a one-dimensional torus of $G$ which is the image of an associated cocharacter of $X$.  Then $u=\phi(X)$ is a $p$-unipotent element in $G$, and as $C_G(X)=C_G(u)$, we have that $T$ is $u$-distinguished.  For every $0\ne s\in k$ we have that $C_G(sX)=C_G(X)=C_G(u)$, hence $\phi(kX)$ is a one-parameter subgroup of $Z(C_G(u))$, and by Theorem \ref{center} it follows that $\phi(kX) \subseteq Z(C_G(u))^0$.  But this one-parameter subgroup is $T$-stable since $T$ stabilizes $kX$ and $\phi$ is $T$-equivariant.  By Theorem \ref{overgroup}, $\phi(kX)$ is therefore the saturation of $u$, and since $u = \phi(X)$, it follows that the unique monomorphism $\varphi_u$ in Theorem \ref{mono} is given by $\varphi_u(s) = \phi(sX)$.  Since $\phi$ is a Springer isomorphism and $X \in \mathfrak{u}_P$, then by Theorem \ref{differential}(1) the saturation of $u$ is a subgroup of $U_P$.  The argument in the proof of \cite[Proposition 5.5]{Sei} then applies and shows that there is some $Y \in \mathfrak{u}_P$ such that $\varphi_u(s) = \varepsilon_P(sY)$.  As both $\phi$ and $\varepsilon_P$ have tangent map the identity, the equality $\varphi_u(s) = \varepsilon_P(sY)$ implies on the one hand that $d\varphi_u(\frac{d}{dt})=Y$, while we see that $d\varphi_u(\frac{d}{dt})=X$ from the fact that $\varphi_u(s)=\phi(sX)$.  Therefore $Y=X$ and $\varepsilon_P(X)=\varphi_u(1)=\phi(X)$.  Since $P$ and $X$ were arbitrary, this finishes the proof.
\end{proof}

We now use Theorem \ref{overgroup} to construct a Springer isomorphism which will satisfy the hypotheses in the previous proposition.  We remind the reader that as pointed out in the remarks following Theorem \ref{overgroup}, if $T$ is a $u$-distinguished torus and $W$ is the unique $T$-homocyclic subgroup of $C_G(U)^0$ containing $u$, then $\textup{Lie}(W)$ contains a $T$-weight vector of weight $2$ having the same centralizer in $G$ as does $u$.

\begin{prop}\label{additive}
Let $u$ be a regular unipotent element in $G$, let $T$ be a $u$-distinguished torus, and let $W \subseteq C_G(U)^0$ be the unique $T$-stable subgroup containing $u$.  Let $X \in \textup{Lie}(W)$ be a $T$-weight vector of weight $2$ such that $C_G(X)=C_G(u)$, and let $\phi$ be the Springer isomorphism for $G$ defined by $\phi(X)=u$.  Then if $Y \in \mathcal{N}(\mathfrak{g})$ is of nilpotent order $p^m$, there is for every $a,b \in k$ some $g \in \mathcal{U}(G)$ of order $<p^m$ such that $\phi(aY+bY)=\phi(aY)\phi(bY)g$.
\end{prop}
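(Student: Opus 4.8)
The plan is to reduce the whole statement to a single additivity assertion, and then to feed that assertion through Seitz's overgroup theorem. First I would set $v=\phi(Y)$, which by Remark \ref{sameorder} has unipotent order $p^m$. For any $s\in k$ one has $C_G(\phi(sY))=C_G(sY)=C_G(Y)=C_G(v)$ (for $s=0$ this is vacuous, $\phi(0)=1$), so $\phi(sY)$ is a unipotent element of $Z(C_G(v))$ and hence, by Theorem \ref{center}, lies in $\bar W:=Z(C_G(v))^0$, which is a connected abelian unipotent group. In particular $\phi(aY),\phi(bY),\phi((a+b)Y)$ all lie in the abelian group $\bar W$; setting $g:=\phi((a+b)Y)\,\phi(aY)^{-1}\phi(bY)^{-1}$ we have $\phi(aY+bY)=\phi(aY)\phi(bY)g$ with $g$ unipotent, by construction. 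Since in an abelian unipotent group the kernel of the $p^{m-1}$-power map is exactly the set of elements of order dividing $p^{m-1}$, and ``order ${}<p^m$'' means ``order dividing $p^{m-1}$'', it suffices to show that $\sigma\colon k\to\bar W$, $\sigma(s):=\phi(sY)^{p^{m-1}}$, is additive: then $g^{p^{m-1}}=\sigma(a+b)\sigma(a)^{-1}\sigma(b)^{-1}=1$.

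To get hold of $\sigma$ I would introduce a cocharacter $\lambda$ of $G$ associated to $Y$, with image $S$, so that $Y$ is an $S$-weight vector of weight $2$ and $Y$ is distinguished in the Levi subgroup $L$ determined by $\lambda$. Because $C_G(Y)=C_G(v)$, one checks that $S$ is $v$-distinguished in Seitz's sense (take the witnessing nilpotent element to be $Y$ itself, and note $C_L(v)=L\cap C_G(v)=L\cap C_G(Y)=C_L(Y)$ has unipotent identity component). Let $W'\le\bar W$ be the $S$-homocyclic subgroup of $Z(C_G(v))^0$ containing $v$ given by Theorem \ref{overgroup}; it is isogenous to $\mathcal W_m$, it is $S$-stable, and $S$ acts on $W'/W'^p$ by weight $2$. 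By the homocyclic property (remark (3) after Theorem \ref{overgroup}) one has $W'^{p^{m-1}}=W'_{p}$, which Theorem \ref{overgroup} identifies with the saturation of $v^{p^{m-1}}$; thus $W'^{p^{m-1}}\cong\mathbb G_a$, it is $S$-stable, and $S$ acts on it by weight $2p^{m-1}$ (by \cite[Lemma 4.3]{Sei2}, or directly: if $X'\in\textup{Lie}(W')$ is a weight-$2$ generator of the restricted Lie algebra $\textup{Lie}(W')$, then $X'^{[p^{m-1}]}$ spans $\textup{Lie}(W'^{p^{m-1}})$ and $\lambda(c).X'^{[p^{m-1}]}=(c^2X')^{[p^{m-1}]}=c^{2p^{m-1}}X'^{[p^{m-1}]}$).

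Next I would show $\sigma$ takes values in $W'^{p^{m-1}}$. One has $\sigma(0)=1$, and for $s\neq0$ pick $c\in k^{\times}$ with $c^2=s$; since $\phi$ is $G$-equivariant and $\lambda(c).Y=c^2Y=sY$, we get $\phi(sY)=\lambda(c)\,v\,\lambda(c)^{-1}$, so $\sigma(s)=\lambda(c)\,v^{p^{m-1}}\lambda(c)^{-1}\in\lambda(c).W'^{p^{m-1}}=W'^{p^{m-1}}$, using that $W'$ is $S=\lambda(\mathbb G_m)$-stable. Thus $\sigma$ is a morphism $\mathbb A^1\to W'^{p^{m-1}}\cong\mathbb G_a$ with $\sigma(0)=0$ satisfying $\lambda(c).\sigma(s)=\sigma(c^2s)$, i.e. $c^{2p^{m-1}}\sigma(s)=\sigma(c^2s)$ for all $c\in k^{\times}$. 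Writing $\sigma(s)=\sum_{i\ge0}\alpha_is^i$, the relation $\sigma(0)=0$ kills $\alpha_0$, and comparing coefficients forces $\alpha_i=0$ unless $i=p^{m-1}$; hence $\sigma(s)=\alpha\,s^{p^{m-1}}$ for a single constant $\alpha$, which is additive because $p^{m-1}$ is a power of $p$. This finishes the reduction and hence the proof.

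The only substantive external input is Seitz's overgroup theorem together with its homocyclic refinement; I expect the main obstacle to be the bookkeeping around it, namely verifying that the torus $S$ associated to $Y$ really is $v$-distinguished and that the resulting $W'$ is $S$-stable, so that $\sigma(s)=\phi(sY)^{p^{m-1}}$ can be confined to the one-dimensional canonical group $W'^{p^{m-1}}$ — once that is done the rest is the elementary weight computation above. I would also note that the argument never uses anything about $\phi$ beyond its being a Springer isomorphism (the hypothesis $C_G(X)=C_G(u)$ of the statement is not invoked), so the conclusion holds for an arbitrary Springer isomorphism.
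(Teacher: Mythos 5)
Your decomposition (reduce to the single map $\sigma(s)=\phi(sY)^{p^{m-1}}$ landing in a one-dimensional $S$-stable subgroup, then read off additivity from the $S$-weight) is elegant and \emph{would} be a genuinely different route from the paper, but it has a gap at the crucial step, and the gap is fatal since it leads to a false strengthening of the statement.

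The gap is the assertion that the torus $S=\lambda(\mathbb G_m)$, with $\lambda$ an associated cocharacter of $Y$, is $v$-distinguished for $v=\phi(Y)$ when $\phi$ is an \emph{arbitrary} Springer isomorphism. You take the witness to be $Y$ itself, and indeed $Y$ has $S$-weight $2$ and $C_G(Y)=C_G(v)$; but Seitz's overgroup theorem then promises an $S$-homocyclic subgroup $W'\le Z(C_G(v))^0$ containing $v$, and for a generic Springer isomorphism no such $W'$ exists. Concretely, take $G=SL_3$, $p=5$, $Y=N$ the regular nilpotent Jordan block, $S=\{\mathrm{diag}(t,1,t^{-1})\}$, and the Springer isomorphism $\phi(Z)=1+Z+Z^2$. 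Then $v=\phi(N)=1+N+N^2$ and $Z(C_G(v))^0=\{1+bN+cN^2\}$, on which $S$ acts by $(b,c)\mapsto(t^2b,t^4c)$. The only $S$-stable one-dimensional closed irreducible subvariety containing $v$ is $\{(b,b^2)\}$, and this is not a subgroup in characteristic $\neq2$ (the group law is $(b,c)+(b',c')=(b+b',c+c'+bb')$). So Theorem \ref{overgroup} simply does not produce a $W'$ for this pair $(v,S)$, and $S$ cannot be $v$-distinguished in Seitz's actual sense; whatever the precise definition, it must encode more than the three conditions you check. Indeed if your argument were correct for arbitrary $\phi$, then by Proposition \ref{sufficient} (after rescaling to make the tangent map the identity) \emph{every} Springer isomorphism would restrict to $\varepsilon_P$, and this is false: for the $\phi$ above, $\phi(aN)\phi(bN)=1+(a+b)N+(a^2+ab+b^2)N^2\neq\phi((a+b)N)$ whenever $ab\neq0$, so your final $\sigma$ is not additive.

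What actually makes the paper's choice of $\phi$ work is that the roles are reversed: the paper starts from a genuinely $u$-distinguished $T$, extracts the overgroup $W$ from Theorem \ref{overgroup}, takes the witness $X$ \emph{inside} $\mathrm{Lie}(W)$ (using Seitz's Lemma 2.7), and only then defines $\phi$ by $\phi(X)=u$. With that setup, $\phi$ carries the $T$-orbit $T.X\cup\{0\}=kX$ onto $T.u\cup\{1\}\subseteq W$ by $T$-equivariance and $T$-stability of $W$, so the confinement inside an abelian overgroup comes for free; the paper then handles the defect $\phi(aX)\phi(bX)\phi(-aX-bX)\in W^p$ by the auxiliary trick of realizing it as $\psi(X)^{p^i}$ for a second Springer isomorphism $\psi$ and spreading out by $G$-equivariance. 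Your argument, by contrast, picks $v=\phi(Y)$ first and tries to produce the overgroup afterwards from the cocharacter of $Y$, and this is where the logic breaks: the tangent vector of the saturation of $v$ is generally \emph{not} $Y$ (in the $SL_3$ example it is $N+\tfrac12N^2$, not $N$), so the cocharacter of $Y$ does not stabilize the saturation. The hypothesis in the statement that $X\in\mathrm{Lie}(W)$ and $\phi(X)=u$ is therefore essential and is precisely what your remark at the end discards.
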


\begin{proof}
Write $|u| = p^r$, and let $a,b \in k$.  Since $kX = T.X \cup \{0\}$ and $W$ is stablized by $T$, $\phi$ maps the line $kX$ to the closed subspace $T.u \cup \{1\} = \overline{T.u} \subseteq W$.  We have that $W/W^p \cong \mathbb{G}_a$, and $\overline{T.u}$ clearly maps isomorphically (as a variety) onto this quotient.  We therefore have an isomorphism of varieties from $kX$ to $\mathbb{G}_a$ which sends $0$ to $0$, hence an isomorphism of algebraic groups.  This shows that $\phi(aX)\phi(bX)\phi(-aX-bX) \in W^p$.  As every element in $W$ of order less than $p^r$ is the $p^i$-th power of an element of maximal order for some $i$ (this is noted in the remarks following Theorem \ref{overgroup}), there is some $w \in W$ having the same order as $u$ and $i>0$ such that $$\phi(aX)\phi(bX)\phi(-aX-bX) = w^{p^i}$$
We observe that $w$ is sent to a non-identity element in $W/W^p$, so that there is some $s \in T$ such that $w \in s.uW^p$.  This implies by \cite[Lemma 2.4]{Sei2} that $w$ is in the $G$-orbit of $s.u$ hence in the $G$-orbit of $u$, so by \cite[Lemma 2.2(iii)]{Sei2} we have $C_G(u)=C_G(w)$.  Thus there is a Springer isomorphism $\psi$ with $\psi(X)=w$.

Now let $\widetilde{\phi}$ be the map from $\mathcal{N}(\mathfrak{g})$ to $G$ defined by $$\widetilde{\phi}(Y) = \phi(aY)\phi(bY)\phi(-aY-bY).$$  It is not hard to see that $\widetilde{\phi}$ defines a $G$-equivariant morphism of varieties.  Indeed, viewing $\phi$ as a morphism to $G$ via inclusion, $\widetilde{\phi}$ can be factored as $$\mathcal{N}(\mathfrak{g})  \xrightarrow{f} \mathcal{N}(\mathfrak{g}) \times \mathcal{N}(\mathfrak{g}) \times \mathcal{N}(\mathfrak{g}) \xrightarrow{\phi \times \phi \times \phi} G \times G \times G \xrightarrow{mult.} G$$  with $f(Y) = \left(aY,bY,-aY-bY\right)$, and $G$ acting diagonally on the product varieties.

Let $\psi^{p^i}$ be the morphism from $\mathcal{N}(\mathfrak{g})$ to $G$ given by $\psi^{p^i}(Y) = \psi(Y)^{p^i}$.  In a similar way this is seen to be a $G$-equivariant morphism.  Since $\widetilde{\phi}(X) = w^{p^i} = \psi^{p^i}(X)$ and both maps are $G$-equivariant morphisms, they must be equal on the regular nilpotent orbit, hence by density on all of $\mathcal{N}(\mathfrak{g})$.  Thus, for all $Y \in \mathcal{N}(\mathfrak{g})$, we have that $\phi(aY)\phi(bY)\phi(-aY-bY) = \psi(Y)^{p^i}$.  By Remark \ref{sameorder} if $Y$ has nilpotent order $p^m$ then $\psi(Y)^{p^i}$ is a unipotent element of order $<p^m$.  As the choice of $a,b$ was arbitrary, this proves the proposition.
\end{proof}

These two propositions now prove the following:

\begin{thm}\label{answer}
If $G$ is simple and $p$ is separably good for $G$, then there exists a Springer isomorphism $\phi: \mathcal{N}(\mathfrak{g}) \rightarrow \mathcal{U}(G)$ such that $\phi$ restricts to $\varepsilon_P$ for every restricted parabolic subgroup  $P \le G$.
\end{thm}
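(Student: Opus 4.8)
The plan is to feed the Springer isomorphism produced by Proposition \ref{additive} into Proposition \ref{sufficient}, after one harmless rescaling. Concretely, fix a regular unipotent element $u \in G$, a $u$-distinguished torus $T$, the unique $T$-homocyclic overgroup $W \subseteq Z(C_G(u))^0$ of $u$ from Theorem \ref{overgroup}, and a $T$-weight vector $X \in \textup{Lie}(W)$ of weight $2$ with $C_G(X) = C_G(u)$. Let $\phi_0$ be the Springer isomorphism determined by $\phi_0(X) = u$, which exists because, as recalled above, a regular nilpotent element together with a regular unipotent element having the same centralizer determines a Springer isomorphism. By construction Proposition \ref{additive} applies to $\phi_0$.

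Next I would specialize that proposition to the restricted nullcone. A nonzero $Y \in \mathcal{N}_1(\mathfrak{g})$ has nilpotent order exactly $p$, so for all $a,b \in k$ Proposition \ref{additive} (with $m=1$) gives some $g \in \mathcal{U}(G)$ of order strictly less than $p$ with $\phi_0((a+b)Y) = \phi_0(aY)\phi_0(bY)g$; since the only unipotent order smaller than $p$ is $1$, necessarily $g = 1$, and the resulting identity $\phi_0((a+b)Y) = \phi_0(aY)\phi_0(bY)$ is trivially true also when $Y = 0$. Hence $s \mapsto \phi_0(sY)$ is a homomorphism of algebraic groups $\mathbb{G}_a \to G$ for every $Y \in \mathcal{N}_1(\mathfrak{g})$, and it is injective because $\phi_0$ is, so $\phi_0(kY)$ is a one-parameter additive subgroup of $G$. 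To meet the normalization demanded by Proposition \ref{sufficient}, let $c \in k^{\times}$ be the scalar giving the tangent map of $\phi_0$ (Remark \ref{tangentmap}) and set $\phi(Y) := \phi_0(c^{-1}Y)$. Precomposing with a scalar automorphism of $\mathcal{N}(\mathfrak{g})$ keeps $\phi$ a $G$-equivariant variety isomorphism onto $\mathcal{U}(G)$, i.e.\ a Springer isomorphism, whose tangent map is now the identity; moreover $\phi(kY) = \phi_0(kY)$ and $s \mapsto \phi(sY) = \phi_0(c^{-1}sY)$ is still a homomorphism by the additivity of $\phi_0$ along $kY$ just established, so $\phi$ again sends every line in $\mathcal{N}_1(\mathfrak{g})$ to a one-parameter additive subgroup of $G$.

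With this $\phi$ the hypotheses of Proposition \ref{sufficient} are satisfied, and the proposition yields that $\phi$ restricts to $\varepsilon_P$ on $\mathfrak{u}_P$ for every restricted parabolic subgroup $P \le G$, which is exactly the assertion. I do not anticipate a genuine obstacle here, since Propositions \ref{sufficient} and \ref{additive} carry essentially all the weight; the one point that must be got exactly right is that the phrase ``order strictly less than $p^m$'' degenerates to ``trivial'' precisely on the restricted nullcone ($m=1$), and it is this degeneration that upgrades the approximate additivity of Proposition \ref{additive} to the honest additivity that Proposition \ref{sufficient} requires.
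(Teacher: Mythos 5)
Your proof is correct and follows exactly the same route as the paper: take the Springer isomorphism from Proposition \ref{additive}, observe that on the restricted nullcone the error term $g$ must be trivial so lines map to one-parameter additive subgroups, rescale so the tangent map is the identity, and invoke Proposition \ref{sufficient}. You have merely spelled out more carefully the degeneration at $m=1$ and the compatibility of the rescaling with the line condition, both of which the paper leaves implicit.
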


\begin{proof}
We may take $\phi$ to be as in Proposition \ref{additive}, possibly composing with a scalar map on $\mathcal{N}$ if needed to ensure that the tangent map is the identity thanks to property (2) in Theorem \ref{differential}.  It follows that if $Y^{[p]}=0$, then $\phi(aY+bY)=\phi(aY)\phi(bY)$, therefore $\phi(kY)$ is a one-parameter additive subgroup of $G$.  As the tangent map of $\phi$ is the identity, we may now apply Proposition \ref{sufficient} which completes the proof.
\end{proof}

\begin{remark}\label{expomono}
Though it is clear from the arguments in this section, we highlight for later use that if $\phi$ restricts to $\varepsilon_P$ for all restricted $P$, then $\phi$ ``exponentiates" the one-parameter subgroups in Theorem \ref{mono}.  That is, if $Y=d\varphi_u(\frac{d}{dt})$, then $\varphi_u(a)=\phi(aY)$.
\end{remark}

\section{An Explicit Isomorphism For Classical Groups}

In this section we show that for classical matrix groups, the existence of a Springer isomorphism restricting to $\varepsilon_P$ can be given explicitly by the Artin-Hasse exponential series.

\bigskip
Let $a = \{a_i\}_{i=1}^{n-1}$ be any sequence of elements in $k$, and consider the map 

$$\phi_a: \mathcal{N}(\mathfrak{gl}_n) \rightarrow \mathcal{U}(GL_n), \quad \phi_a(Y) = 1 + \sum_{i=1}^{n-1} a_i Y^i$$

This map is algebraic, respects the conjugation action of $GL_n$, and thus defines a $GL_n$-equivariant morphism from $\mathcal{N}(\mathfrak{gl}_n)$ to $\mathcal{U}(GL_n)$.  Moreover if $a_1 \ne 0$ and if $X$ is regular nilpotent, then it follows from \cite[6.7(1)]{J} that $a_1X + \sum_{i=2}^{n-1} a_i X^i$ will also be regular nilpotent, so that $\phi_a(X)$ is a regular unipotent element.  This is most easily seen when $X$ is the nilpotent matrix which is a Jordan block of size $n$, and it is then true for any conjugate of $X$.

We see that $C_{GL_n}(X) \subseteq C_{GL_n}(\phi_a(X))$.  By the existence of a Springer isomorphism for $GL_n$, $C_{GL_n}(\phi_a(X))$ is equal to $gC_{GL_n}(X)g^{-1}$ for some $g \in GL_n$.  The inclusion $C_{GL_n}(X) \subseteq gC_{GL_n}(X)g^{-1}$ implies that they are equal as they have the same dimension and are both connected, so we have $C_{GL_n}(X) =  gC_{GL_n}(X)g^{-1} = C_{GL_n}(\phi_a(X))$.  Thus there is a Springer isomorphism $\phi$ which maps $X$ to $\phi_a(X)$, and it must in fact be given by $\phi_a$, since $\phi$ and $\phi_a$ are equal on the regular nilpotent orbit which is open in the irreducible variety $\mathcal{N}(\mathfrak{gl}_n)$.  In this way any sequence $a_1, \ldots, a_{n-1}, a_1 \ne 0$, defines a Springer isomorphism for $GL_n$ (compare with \cite[\S 10]{M2}).

In particular, we may choose a sequence such that $a_i = 1/i!$ for $i<p$.  If $\phi$ is the resulting Springer isomorphism, then for a $[p]$-nilpotent matrix $Y$ we have $$\phi(Y) = 1 + Y + \frac{Y^2}{2} + \cdots + \frac{Y^{p-1}}{(p-1)!}$$ hence $\phi(aY+bY)=\phi(aY)\phi(bY)$ for all $a,b \in k$.  By Proposition \ref{sufficient}, such a sequence will define a Springer isomorphism for $GL_n$ which has our desired restriction property.  If $G$ is one of the classical subgroups of $GL_n$ listed above, however, it is not true in general that $\phi$ will restrict to a Springer isomorphism for $G$.  To ensure this latter property holds, we will work with the sequence given by the Artin-Hasse exponential series.

\begin{prop}\label{restricts}
Let $G$ be either $SO_n$ or $Sp_{n}$, $n=2n^{\prime}$ in the latter case, and identify $G \le GL_n$ via its natural embedding.  Let $\phi$ be the Springer isomorphism for $GL_n$ given by the sending $X \in \mathcal{N}(\mathfrak{gl}_n)$ to $e_p(X)$, where $e_p(t)$ is the image of the Artin-Hasse exponential series in $k[\![ t ]\!]$.  Then $\phi$ restricts to a Springer isomorphism for $G$.
\end{prop}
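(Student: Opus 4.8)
The plan is to realize $G$ through the adjoint anti-involution of its defining form and then to exploit an ``oddness'' property of the Artin--Hasse series. Let $J$ be the Gram matrix of the symmetric (resp.\ alternating) bilinear form cutting out $G$ inside $GL_n$, and let $\sigma$ be the associated adjoint anti-automorphism of $M_n(k)$, so that $\sigma(A) = J^{-1}A^{\mathrm{t}}J$. This $\sigma$ is $k$-linear, satisfies $\sigma^2 = \mathrm{id}$, reverses products, and one has $G = \{\, g \in GL_n : \sigma(g) = g^{-1} \,\}$ (the extra determinant condition defining $SO_n$ inside $O_n$ is automatic on unipotent elements) and $\mathfrak{g} = \{\, X \in \mathfrak{gl}_n : \sigma(X) = -X \,\}$; hence $\mathcal{N}(\mathfrak{g}) = \mathfrak{g} \cap \mathcal{N}(\mathfrak{gl}_n)$ and $\mathcal{U}(G) = G \cap \mathcal{U}(GL_n)$, using that $p$ is good. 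Since $\sigma$ reverses products, for any single matrix $A$ and any one-variable polynomial $h$ one has $\sigma(h(A)) = h(\sigma(A))$; applying this to $h = e_p$ and $A = X$ nilpotent (where $e_p$ evaluates to a polynomial in $X$) gives $\sigma(\phi(X)) = \phi(\sigma(X))$. As $\phi$ is a variety isomorphism intertwining the variety automorphism $\sigma$ of $\mathcal{N}(\mathfrak{gl}_n)$ with that of $\mathcal{U}(GL_n)$, its inverse $\phi^{-1}$ likewise commutes with $\sigma$.

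The crucial computation is the identity $E_p(-t) = F_p(t)$ in $\mathbb{Z}_{(p)}\llbracket t \rrbracket$. Writing $L(t) = t + t^p/p + t^{p^2}/p^2 + \cdots$, every exponent $p^j$ occurring in $L$ is odd because $p$ is good, hence $> 2$, for a group of type $B$, $C$, or $D$; therefore $L(-t) = -L(t)$ and $E_p(-t) = \exp(L(-t)) = \exp(-L(t)) = F_p(t)$. Reducing modulo $p$ gives $e_p(-t) = f_p(t)$ in $k\llbracket t \rrbracket$, and combining this with the relation $F_p(t)E_p(t) = 1$ of \cite{D} yields $e_p(-t)\,e_p(t) = 1$. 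Evaluating at a nilpotent matrix $X$, where both series truncate to a polynomial identity in $M_n(k)$, we obtain $\phi(-X) = e_p(-X) = e_p(X)^{-1} = \phi(X)^{-1}$.

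With these two ingredients the remainder is routine. If $X \in \mathcal{N}(\mathfrak{g})$, so $\sigma(X) = -X$, then $\sigma(\phi(X)) = \phi(\sigma(X)) = \phi(-X) = \phi(X)^{-1}$, whence $\phi(X) \in G$; and $\phi(X)$ is unipotent, so $\phi$ maps $\mathcal{N}(\mathfrak{g})$ into $\mathcal{U}(G)$. Conversely, applying $\phi^{-1}$ to the relation $\phi(-X) = \phi(X)^{-1}$ shows $\phi^{-1}(u^{-1}) = -\phi^{-1}(u)$ for every unipotent $u$; hence for $u \in \mathcal{U}(G)$ one gets $\sigma(\phi^{-1}(u)) = \phi^{-1}(\sigma(u)) = \phi^{-1}(u^{-1}) = -\phi^{-1}(u)$, so $\phi^{-1}(u) \in \mathfrak{g}$, and being nilpotent it lies in $\mathcal{N}(\mathfrak{g})$. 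Thus $\phi$ and $\phi^{-1}$ restrict to mutually inverse morphisms between $\mathcal{N}(\mathfrak{g})$ and $\mathcal{U}(G)$, and since $\phi$ is $GL_n$-equivariant it is a fortiori $G$-equivariant; the restriction is therefore a Springer isomorphism for $G$.

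I do not anticipate a genuine obstacle; the only points requiring care are the bookkeeping with the anti-automorphism $\sigma$ (notably that it commutes with one-variable series applied to a single matrix, and therefore with $\phi^{-1}$) and the verification that $E_p$ is inverted by its reflection $t \mapsto -t$ in odd characteristic --- which is exactly the property that the naive choice $a_i = 1/i!$ for $i < p$ fails once $X^p \neq 0$, and the reason the Artin--Hasse series is needed here. Alternatively, one could quote \cite[Proposition 7.5]{M} for the analogous statement with $F_p$ and transfer it to $E_p$ via $E_p(t) = F_p(-t)$ together with the fact that $X \mapsto -X$ is a $G$-equivariant automorphism of $\mathcal{N}(\mathfrak{g})$, but the argument above is essentially self-contained.
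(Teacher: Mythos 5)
Your proof is correct, but it takes a genuinely different route than the paper, which simply cites McNinch's \cite[Proposition 7.5]{M}: there McNinch shows that for $X\in\mathfrak{g}$ the Witt group embedding $(a_0,\dots,a_{m-1})\mapsto e_p(a_0X)\cdots e_p(a_{m-1}X^{p^{m-1}})$ already lands in $G$, and the paper observes that $e_p(X)$ is the image of $(1,0,\dots,0)$. You instead reprove the containment from scratch via the adjoint anti-involution: you use that $\sigma(h(A))=h(\sigma(A))$ for any polynomial $h$ evaluated at a single matrix $A$ (product reversal is invisible on powers of one element), and you observe that because $p>2$ in types $B,C,D$ every exponent $p^j$ appearing in the logarithm $L(t)$ is odd, so $E_p(-t)=\exp(-L(t))=F_p(t)$, whence $e_p(-t)e_p(t)=1$ and $\phi(-X)=\phi(X)^{-1}$. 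Chaining these gives $\sigma(\phi(X))=\phi(X)^{-1}$ for $X\in\mathcal N(\mathfrak g)$, and you handle the determinant normalization for $SO_n$ by noting it is automatic on unipotents. Your treatment of the inverse direction (that $\phi^{-1}$ sends $\mathcal U(G)$ into $\mathcal N(\mathfrak g)$, by transporting the same two identities through $\phi^{-1}$) is a welcome touch that settles bijectivity directly. The trade-off is that your argument is self-contained and makes the role of the ``oddness'' of the Artin--Hasse exponent pattern transparent, while the paper's citation of McNinch is briefer and ties in naturally with the Witt-group machinery it needs anyway for Theorem~\ref{main}(3); the key computation underlying both is essentially the reflection property of $E_p$ that you isolate. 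As you note at the end, the two could also be merged by transporting $F_p$ to $E_p$ via $t\mapsto -t$.
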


\begin{proof}
Let $0 \ne X \in \mathcal{N}(\mathfrak{gl}_n)$ be of nilpotent degree $p^m$.  In \cite[Proposition 7.5]{M}, McNinch proves that if $X \in \mathfrak{g}$, then the injective morphism of Theorem \ref{wittmorphism} has image in $G$.  In particular, $e_p(X)$ is the image of $(1,0,\ldots,0)$ under this map, proving the claim.
\end{proof}

\begin{remark}
As noted earlier, the definition of the Artin-Hasse exponential used in \cite{M} is inverse to the one we are using.  Thus, the definition of the map $E_X$ given here would correspond in McNinch's work to the map $E_{-X}$.  As $X \in \mathcal{N}(\mathfrak{g}) \iff -X \in \mathcal{N}(\mathfrak{g})$, the proof holds regardless. 
\end{remark}

\section{Statement of Main Result}

\begin{thm}\label{main}
Let $G$ be a simple algebraic group, and suppose that $p$ is separably good for $G$.  Then there is a Springer isomorphism $\phi: \mathcal{N}(\mathfrak{g}) \xrightarrow{\sim} \mathcal{U}(G)$ such that:

\begin{enumerate}
\item For any restricted parabolic $P \le G$, $\phi$ restricted to $\mathfrak{u}_P$ is $\varepsilon_P$.
\item For all $X \in \mathcal{N}(\mathfrak{g})$, $\phi(X^{[p]}) = \phi(X)^p$.
\item If $X \ne 0$, and $m$ is the least integer such that $X^{[p^{m}]}=0$, then $\phi$ defines an injective morphism $\mathcal{W}_m \rightarrow G$ given by 
\vspace{0.1in}
\begin{center}$(a_0,a_1,\ldots,a_{m-1}) \mapsto \phi(a_0X)\phi(a_1X^{[p]})\cdots \phi(a_{m-1}X^{[p^{m-1}]})$\end{center}
\end{enumerate}
\end{thm}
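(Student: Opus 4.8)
The plan is to check that one Springer isomorphism does all three jobs, namely the one produced in Theorem~\ref{answer} — obtained, up to rescaling its tangent map to the identity, from a Witt overgroup as in Proposition~\ref{additive}. For that $\phi$, part~(1) is exactly Theorem~\ref{answer}, so the real content is to prove (2) and (3) for this same $\phi$.

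For (2): both $Y \mapsto \phi(Y^{[p]})$ and $Y \mapsto \phi(Y)^p$ are $G$-equivariant morphisms $\mathcal{N}(\mathfrak{g}) \to G$ — the first because $Y \mapsto Y^{[p]}$ is an $\mathrm{Ad}$-equivariant morphism of $\mathcal{N}(\mathfrak{g})$ — so by density of the regular orbit it suffices to prove $\phi(X^{[p]}) = u^p$ for the regular nilpotent $X$ with $u = \phi(X)$ used to construct $\phi$. I would deduce this from the full tower of Theorem~\ref{overgroup}: for a $u$-distinguished torus $T$ the overgroup $W$ is $T$-homocyclic and isogenous to $\mathcal{W}_r$, each $W^{p^i}$ is again $T$-homocyclic, $W^{p^{r-1}}$ is the saturation of $u^{p^{r-1}}$, and by Remark~\ref{expomono} $\phi$ exponentiates the canonical one-parameter subgroup through $u^{p^{r-1}}$. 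Since $\mathrm{ad}(Z^{[p]}) = \mathrm{ad}(Z)^p$ we get $[X, X^{[p]}] = \mathrm{ad}(X)^{p-1}([X,X]) = 0$, so $X, X^{[p]}, \dots, X^{[p^{r-1}]}$ lie in the restricted subalgebra $\mathrm{Lie}(W)$, and $X^{[p^i]}$ is the weight-$2p^i$ vector of $\mathrm{Lie}(W^{p^i})$ whose centraliser in $G$ is $C_G(u^{p^i})$; matching this data to the one-parameter and homocyclic structures of the subgroups $W^{p^i}$, together with the fact that the tangent map of $\phi$ is the identity, forces $\phi(X^{[p^i]}) = u^{p^i}$, and in particular $\phi(X^{[p]}) = u^p$. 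The delicate point is the interaction with the tangent-map normalisation: rescaling the weight-$2$ vector $X$ by $\lambda$ replaces $\phi$ by $Y \mapsto \phi(\lambda^{-1} Y)$, and only rescalings by $(p-1)$-st roots of unity preserve (2), so one must show $X$ can be chosen with tangent scalar already $1$.

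Given (1) and (2), part~(3) is Witt-vector bookkeeping. Fix $0 \neq X$ of nilpotent order $p^m$, put $u = \phi(X)$ — of unipotent order $p^m$ by Remark~\ref{sameorder} — let $T$ be the image of an associated cocharacter of $X$ (a $u$-distinguished torus), and let $W$ be its $T$-homocyclic overgroup, so that $X \in \mathrm{Lie}(W)$ is the weight-$2$ vector and, by (2), $\phi(a X^{[p^i]}) \in W^{p^i}$ for every $a$. Hence the morphism $\Psi(a_0, \dots, a_{m-1}) = \phi(a_0 X)\cdots\phi(a_{m-1} X^{[p^{m-1}]})$ of the statement factors through $W$; the argument in the proof of Proposition~\ref{additive}, applied in each quotient $W^{p^i}/W^{p^{i+1}} \cong \mathbb{G}_a$, shows that $\Psi$ is triangular for the filtration $W \supseteq W^p \supseteq \cdots$ and restricts to a variety isomorphism on each graded piece, whence $\Psi : \mathbb{A}^m \to W$ is an isomorphism of varieties. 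Since $W$ is abelian and $(cY)^{[p]} = c^p Y^{[p]}$, one has $\Psi(a_0, \dots, a_{m-1})^p = \prod_i \phi(a_i^p X^{[p^{i+1}]}) = \Psi(0, a_0^p, \dots, a_{m-2}^p)$, so $\Psi$ intertwines the $p$-power maps of $\mathcal{W}_m$ and of $W$; combined with the fact that $\Psi$ is a homomorphism on the bottom layer $W^{p^{m-1}} \cong \mathbb{G}_a$ (Remark~\ref{expomono}), an induction on $m$ upgrades $\Psi$ to an isomorphism of algebraic groups $\mathcal{W}_m \xrightarrow{\sim} W \le G$. Injectivity is then immediate, and as a byproduct this shows $W \cong \mathcal{W}_m$, extending Lemma~\ref{isoismorphic} to all $m$.

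The main obstacle, as indicated, is realising (1) and (2) simultaneously: (1) alone is Theorem~\ref{answer} and (3) is essentially formal once (1) and (2) hold, but pinning down $\phi(X^{[p]}) = \phi(X)^p$ on a regular nilpotent while keeping the tangent map the identity requires a careful joint analysis of Seitz's overgroups, the restricted structure on $\mathfrak{g}$, and the scalar freedom among Springer isomorphisms.
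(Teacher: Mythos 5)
Your proposal diverges from the paper in a way that creates two genuine gaps, both stemming from trying to run a uniform argument where the paper deliberately splits into cases.

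\textbf{The missing structural input.} Your argument for both (2) and (3) tacitly treats the Seitz overgroup $W$ of Theorem~\ref{overgroup} as if it were literally $\mathcal{W}_m$, so that matching filtration levels ``forces'' $\phi(X^{[p^i]}) = u^{p^i}$ and so that the map $\Psi$ factors through a Witt group. But Theorem~\ref{overgroup} only gives $W$ \emph{isogenous} to $\mathcal{W}_r$ (and $T$-homocyclic), not isomorphic, and the paper points out (citing \cite[VII.11]{Ser2}) that there are non-Witt connected abelian unipotent groups. The isomorphism $W \cong \mathcal{W}_m$ is exactly the hard step. The paper obtains it for exceptional types by a dedicated lemma (Lemma~\ref{isoismorphic}) that only treats dimension $2$, and this suffices because \cite[0.4]{T} bounds the order of unipotent elements by $p^2$ in exceptional type when $p$ is good. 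For classical types the paper sidesteps the structure of $W$ entirely by invoking McNinch's Artin--Hasse result (\cite[Proposition 7.5]{M}), which hands you the injective morphism $\mathcal{W}_m \to G$ directly. You use neither the case split, nor the order bound, nor the Artin--Hasse construction, so you have no route to $W \cong \mathcal{W}_m$ for a general $m$; the parenthetical ``as a byproduct this extends Lemma~\ref{isoismorphic}'' is circular, since the byproduct is being used to prove the theorem from which you claim to extract it.

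\textbf{The Witt bookkeeping in (3) does not close.} Even granting (1) and (2), the claim that intertwining $p$-power maps plus being a homomorphism on the bottom layer $W^{p^{m-1}}\cong\mathbb{G}_a$ forces $\Psi$ to be a group isomorphism is false. For example, in characteristic $p>2$ the variety automorphism of $\mathcal{W}_2$ given by $(a,b)\mapsto(a,\,b+ca^2)$ with $c\neq 0$ fixes the bottom layer pointwise and commutes with the $p$-power map $(a,b)\mapsto(0,a^p)$, but it is \emph{not} a group automorphism (the composite group laws differ by the term $2cac'$). So your induction on $m$ does not upgrade a variety isomorphism to a group isomorphism. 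The paper's proof avoids this by first pinning down the group structure of $W$ abstractly (Lemma~\ref{isoismorphic} or McNinch's result) and then reading off (2) and (3) from a genuine group isomorphism $f:\mathcal{W}_m\xrightarrow{\sim}W$, rather than trying to build the group isomorphism out of $\Psi$.

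In short: the framing (reduce to the regular orbit by $G$-equivariance, then work inside the Seitz overgroup) matches the paper, but the key step --- establishing that the overgroup actually \emph{is} a Witt group, not merely isogenous to one --- is not supplied, and your proposed formal deduction of (3) from (1) and (2) fails. You correctly flagged the tangent-map normalisation as delicate, but the deeper obstruction is the structure of $W$.
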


\begin{proof}
We know by Theorem \ref{answer} that there is some Springer isomorphism satisfying (1) for all such $G$.  However, to show that one exists which satisfies all of the properties above we will split the proof into classical and exceptional cases.

First suppose that $G$ is one of the groups $SL_n, SO_n$, or $Sp_{2n}$, with its natural embedding in $GL_n$ or $GL_{2n}$, and with corresponding Springer isomorphism given by the Artin-Hasse exponential series.  Property (1) then follows from Proposition \ref{sufficient}, while (3) holds for $\phi$ thanks to \cite[Proposition 7.5]{M}.  To see that (2) holds we note that since the coefficients of $e_p(t)$ lie in $\mathbb{F}_p$ we get $e_p(X)^p = e_p(X^p)=e_p(X^{[p]})$  (in this last equality we are using the fact that the embedding of $G$ guarantees that $X^p$ as an element of $\mathfrak{gl}_n$ is equal to the image of $X^{[p]}$).

The assumption that $p$ is separably good ensures that these results will also apply to any group isogenous to one of these classical groups above, so this proves (2) and (3) for classical types.

If $G$ is of exceptional type, let $\phi$ be as in Proposition \ref{additive} (and again, adjusting if necessary by a scalar map on $\mathcal{N}$ so that tangent map is identity).  As $p$ is good for $G$, \cite[0.4]{T} shows that all unipotent elements either have order $p$ or $p^2$, hence all nilpotent elements have nilpotent order $p$ or $p^2$.  However, it is clear that $\phi$ satisfies (2) and (3) if every element is $[p]$-nilpotent, thus we suppose we are in the second case.

Let $X$ and $T$ be as in the construction of $\phi$ in Proposition \ref{additive}.  So $\phi(X)$ is contained in a unique $W \le Z(C_G(\phi(X)))^0$ which by Lemma \ref{isoismorphic} is isomorphic to $\mathcal{W}_2$ and on which $T$ acts without fixed points.  We have $\phi(kX) \subseteq W$, and we may put coordinates $(a_0,a_1)$ on $\mathcal{W}_2$ so that there is an isomorphism $f: \mathcal{W}_2 \rightarrow W$ with $f((a_0,0)) = \phi(a_0X)$.  As in \S \ref{witt}, we have $f((0,a_1)) = \phi(F^{-1}(a_1)X)^p$, where $F^{-1}$ is the inverse of the Frobenius map (and, we note, not an algebraic map).  We also see that if $k[\mathcal{W}_2] = k[t_0,t_1]$, then the differential $df$ maps $\frac{d}{dt_0} \mapsto X$, and therefore sends $\frac{d}{dt_1} \mapsto X^{[p]}$.

Let $u=\phi(X)^p$.  Because $W^p$ is the saturation of $u$, it follows that the unique monomorphism $\varphi_u$ of Theorem \ref{mono} can be given by $\varphi_u(a)=f(0,a)$.  We see then that $$d\varphi_u\left(\frac{d}{dt}\right) = df\left(\frac{d}{dt_1}\right)=X^{[p]}.$$  By Remark \ref{expomono} we have that $\varphi_u(a) = \phi(aX^{[p]})$.  Thus $$\phi(X^{[p]})=\varphi_u(1)=f(0,1)=\phi(X)^p.$$  This shows that (2) and (3) hold for regular elements, and by arguments similar to those in the proof of Proposition \ref{additive}, must hold for all nilpotent elements.   

\end{proof}

We now establish a simple lemma about Springer isomorphisms (possibly shown elsewhere), after which we have as a corollary that \cite[Theorem 3]{CLN} extends to all separably good primes.

\begin{lemma}
If $\phi$ is any Springer isomorphism for $G$, then for any $X,Y \in \mathcal{N}(\mathfrak{g})$, $[X,Y]=0$ if and only if $\phi(X)$ commutes with $\phi(Y)$.
\end{lemma}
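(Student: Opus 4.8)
The plan is to bypass the machinery of the earlier sections entirely: the statement follows from the fact that a Springer isomorphism is a $G$-equivariant bijection, together with smoothness of centralizers in very good characteristic.

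First I would record that for \emph{any} Springer isomorphism $\phi$ and any $Z \in \mathcal{N}(\mathfrak{g})$ one has $C_G(Z) = C_G(\phi(Z))$: the map $\phi$ intertwines the adjoint action of $G$ on $\mathcal{N}(\mathfrak{g})$ with the conjugation action of $G$ on $\mathcal{U}(G)$ and is bijective, so the stabilizer of $Z$ coincides with the stabilizer of $\phi(Z)$. Granting this, $\phi(X)$ commutes with $\phi(Y)$ if and only if $\phi(Y) \in C_G(\phi(X)) = C_G(X)$, i.e. $\mathrm{Ad}(\phi(Y))X = X$, i.e. $X$ lies in the $\mathrm{Ad}(\phi(Y))$-fixed subspace $\mathfrak{g}^{\phi(Y)} = \{W \in \mathfrak{g} : \mathrm{Ad}(\phi(Y))W = W\}$; on the other hand $[X,Y] = 0$ if and only if $X$ lies in $\mathfrak{z}_\mathfrak{g}(Y) = \{W \in \mathfrak{g} : [W,Y] = 0\}$. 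Thus the whole lemma reduces to the single identity $\mathfrak{g}^{\phi(Y)} = \mathfrak{z}_\mathfrak{g}(Y)$, which delivers both implications simultaneously.

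To prove this identity I would invoke smoothness of centralizers, available because $p$ is very good for $G$: the Lie algebra of the centralizer of the unipotent element $\phi(Y)$ is $\mathfrak{g}^{\phi(Y)}$, the Lie algebra of the centralizer of the nilpotent element $Y$ is $\mathfrak{z}_\mathfrak{g}(Y)$, and these two (reduced) centralizers coincide by the first step. The only delicate point is this smoothness input — that $\mathrm{Lie}(C_G(v)) = \mathfrak{g}^v$ for $v$ unipotent and $\mathrm{Lie}(C_G(Z)) = \mathfrak{z}_\mathfrak{g}(Z)$ for $Z$ nilpotent, in very good characteristic — which I would cite (e.g. from the discussion of centralizers in \cite[\S 2]{J}) rather than reprove; it is ultimately the separability of nilpotent orbits and unipotent classes in good characteristic. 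Notably, nothing here uses the canonical maps $\varepsilon_P$, the Witt-group overgroups of Section 1, or the particular Springer isomorphism produced in Theorem \ref{main}.
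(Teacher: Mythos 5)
Your proof is correct, and it follows essentially the same strategy as the paper's: both run through the equality of centralizers $C_G(Z) = C_G(\phi(Z))$ (a consequence of $G$-equivariance and bijectivity of $\phi$) and smoothness of centralizers in very good characteristic, citing Jantzen for $\mathrm{Lie}(C_G(Z)) = \mathfrak{z}_\mathfrak{g}(Z)$. Where you diverge slightly is the converse direction. The paper goes from ``$\phi(X) \in C_G(\phi(Y))$'' to ``$X \in \mathrm{Lie}(C_G(\phi(Y)))$'' by quoting the remarks preceding \cite[Theorem E]{MT}, which is a Springer-isomorphism-specific statement about how $\phi$ interacts with centralizer Lie algebras. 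You instead exploit the symmetry of the commutation relation: $\phi(X)$ commutes with $\phi(Y)$ is rephrased as $\phi(Y) \in C_G(\phi(X)) = C_G(X)$, i.e.\ $X \in \mathfrak{g}^{\phi(Y)}$, and then you use smoothness of the unipotent centralizer $C_G(\phi(Y))$ to identify $\mathfrak{g}^{\phi(Y)}$ with $\mathrm{Lie}(C_G(\phi(Y))) = \mathrm{Lie}(C_G(Y)) = \mathfrak{z}_\mathfrak{g}(Y)$. This is a clean rearrangement: it collapses both implications into the single identity $\mathfrak{g}^{\phi(Y)} = \mathfrak{z}_\mathfrak{g}(Y)$ and thereby substitutes the standard smoothness of unipotent centralizers (available from \cite[\S 2]{J} in very good characteristic) for the pointed reference to McNinch--Testerman. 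Both proofs lean on the same foundational smoothness facts, but yours packages them a bit more symmetrically.
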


\begin{proof}
Under our conditions on $p$, we have by \cite[\S 2.5,2.6]{J} that $C_{\mathfrak{g}}(Y)=\textup{Lie}(C_G(Y))$.  If $[X,Y]=0$, then $X \in C_{\mathfrak{g}}(Y)=\textup{Lie}(C_G(Y))=\textup{Lie}(C_G(\phi(Y))$, therefore $\phi(Y)$ acts trivially via the adjoint action on $X$, and hence $\phi(Y)$ commutes with $\phi(X)$.  Conversely, by the remarks preceding \cite[Theorem E]{MT}, if $\phi(X) \in C_G(\phi(Y))$ then $X \in \textup{Lie}(C_G(\phi(Y))$, hence $X \in \textup{Lie}(C_G(Y)) = C_{\mathfrak{g}}(Y)$. 
\end{proof}

\begin{cor}
Let $G$ and $\phi$ be as in Theorem \ref{main}.  Then $\phi$ restricts to a unique $G$-equivariant isomorphism $\phi_1: \mathcal{N}_1(\mathfrak{g}) \xrightarrow{\sim} \mathcal{U}_1(G)$ of algebraic varieties with the following properties:

\begin{enumerate}
\item For all $0 \ne X \in \mathcal{N}_1(\mathfrak{g})$, $\phi_1$ restricted to $kX$ is a one-parameter subgroup of $G$ which can be extended to a good $A_1$ subgroup.
\item For any $X,Y \in \mathcal{N}_1(\mathfrak{g})$, $[X,Y]=0$ if and only if $\phi_1(X)$ commutes with $\phi_1(Y)$.
\item The isomorphism $\phi_1$ is defined over $\mathbb{F}_p$.
\end{enumerate}
\end{cor}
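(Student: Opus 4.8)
The plan is to read off all of the assertions from Theorem~\ref{main} together with Remark~\ref{expomono}, Theorem~\ref{mono}, and the lemma immediately preceding the corollary; the only point requiring real work will be the rationality statement~(3) in the exceptional cases.

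\emph{Construction of $\phi_1$, and property~(2).} By Theorem~\ref{main}(2) we have $\phi(X)^p=\phi(X^{[p]})$ for all $X\in\mathcal N(\mathfrak g)$, so, using that $\phi$ is a bijection with $\phi(0)=1$, the condition $X^{[p]}=0$ is equivalent to $\phi(X)^p=1$. Hence $\phi$ maps the closed subvariety $\mathcal N_1(\mathfrak g)$ bijectively onto the closed subvariety $\mathcal U_1(G)$, and since $\phi$ is an isomorphism of the ambient varieties its restriction $\phi_1:=\phi|_{\mathcal N_1(\mathfrak g)}$ is a $G$-equivariant isomorphism $\mathcal N_1(\mathfrak g)\xrightarrow{\sim}\mathcal U_1(G)$. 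Property~(2) is then nothing but the lemma stated just before the corollary, applied to $\phi$ and to pairs $X,Y\in\mathcal N_1(\mathfrak g)\subseteq\mathcal N(\mathfrak g)$.

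\emph{Property~(1) and uniqueness.} Fix $0\ne X\in\mathcal N_1(\mathfrak g)$ and put $u:=\phi_1(X)$, a unipotent element of order $p$ by Theorem~\ref{main}(2) and Remark~\ref{sameorder}. Let $\varphi_u\colon\mathbb G_a\to G$ be the saturation of $u$ from Theorem~\ref{mono}, and set $Y:=d\varphi_u(\frac{d}{dt})$, which lies in $\mathcal N_1(\mathfrak g)$ because $\textup{Lie}(\mathbb G_a)$ has trivial $[p]$-operation. By Remark~\ref{expomono}, $\varphi_u(a)=\phi(aY)$ for every $a\in k$; taking $a=1$ gives $\phi(Y)=u=\phi(X)$, hence $Y=X$ by injectivity, and therefore $\phi_1(aX)=\varphi_u(a)$ for all $a$. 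Thus $\phi_1$ restricted to $kX$ is the one-parameter subgroup $\varphi_u$, whose image lies in a good $A_1$ by Theorem~\ref{mono}; this is~(1), and it also exhibits $\phi_1$ as the ``exponential'' of the saturation maps. Running this in reverse, $\phi_1^{-1}(u)=d\varphi_u(\frac{d}{dt})$ for every order-$p$ element $u\in\mathcal U_1(G)$, a description involving only the canonical one-parameter subgroups of Theorem~\ref{mono} and the normalization that the tangent map of $\phi_1$ be the identity (which holds since $\phi$ is constructed with this property; cf.\ Remark~\ref{tangentmap}). Any $G$-equivariant variety isomorphism $\mathcal N_1(\mathfrak g)\to\mathcal U_1(G)$ with these features must then agree with $\phi_1$ on every line $kX$ (arguing as in the proof of Proposition~\ref{sufficient} that its restriction to $kX$ is a monomorphism from $\mathbb G_a$, hence the saturation $\varphi_{\phi_1(X)}$), and therefore equal it; this gives the uniqueness, equivalently, every Springer isomorphism as in Theorem~\ref{main} has the same restriction to $\mathcal N_1(\mathfrak g)$.

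\emph{Property~(3), and the main obstacle.} When $G$ is $SL_n$, $SO_n$ or $Sp_{2n}$ this is immediate: there $\phi$ is $X\mapsto e_p(X)$ with $e_p(t)\in\mathbb F_p\llbracket t\rrbracket$, so on $\mathcal N_1(\mathfrak{gl}_n)$, where $X^p=0$, it is the polynomial map $X\mapsto\sum_{i=0}^{p-1}c_iX^i$ with $c_i\in\mathbb F_p$; as $G$, $\mathcal N_1(\mathfrak g)$ and $\mathcal U_1(G)$ carry their standard split $\mathbb F_p$-forms, $\phi_1$ is defined over $\mathbb F_p$, and likewise after passing to any group isogenous to one of these (using that $p$ is very good). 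For $G$ exceptional, where $\phi$ itself is not known to be $\mathbb F_p$-rational, I would argue by descent from the uniqueness just proved. Let $F$ denote geometric Frobenius for the split $\mathbb F_p$-structure, acting compatibly on $G$, $\mathfrak g$, $\mathcal N_1(\mathfrak g)$ and $\mathcal U_1(G)$. One checks that the class of good $A_1$ subgroups of $G$ and Seitz's saturation construction are stable under $F$, the saturation because it is characterized canonically in Theorem~\ref{mono}, the good $A_1$'s because the effect of $F$ on the relevant torus weights on $\mathfrak g$ is, up to an inner automorphism and the surjectivity of $c\mapsto c^p$, trivial. Consequently the Frobenius twist of $\phi_1$, transported back via these $\mathbb F_p$-forms, is again a $G$-equivariant isomorphism of the same pair satisfying~(1) with tangent map the identity; by uniqueness it equals $\phi_1$, which is precisely the assertion that $\phi_1$ is defined over $\mathbb F_p$. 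I expect the two $F$-stability verifications, and the bookkeeping identifying a Frobenius twist with its source, to be the one genuinely technical part of the argument; everything else is assembly of results already in hand.
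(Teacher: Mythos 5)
The paper states this corollary without a proof, treating it as the immediate assembly of Theorem~\ref{main}, Remark~\ref{expomono}, Theorem~\ref{mono}, and the lemma preceding it, and your reconstruction follows exactly the intended route: Theorem~\ref{main}(2) identifies $\phi(\mathcal N_1(\mathfrak g))$ with $\mathcal U_1(G)$, the lemma gives~(2), Remark~\ref{expomono} together with Theorem~\ref{mono} identifies $\phi_1|_{kX}$ with Seitz's saturation $\varphi_{\phi_1(X)}$, giving~(1), and for the classical groups the $\mathbb F_p$-coefficients of the truncated Artin--Hasse series give~(3).

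Two points are worth flagging. First, your uniqueness argument quietly uses the normalization $d\phi_1|_0=\mathrm{id}$ (from Remark~\ref{tangentmap}) to pass from $\phi_1(sX)=\varphi_{\phi_1(X)}(s)$ to the characterization $\phi_1^{-1}(u)=d\varphi_u(\tfrac{d}{dt})$. That normalization does not appear among the listed properties~(1)--(3), and without it the stated uniqueness actually fails: for any $c\in\mathbb F_p^\times$ the map $X\mapsto\phi_1(cX)$ is again a $G$-equivariant isomorphism $\mathcal N_1(\mathfrak g)\to\mathcal U_1(G)$ satisfying~(1)--(3) verbatim, and differs from $\phi_1$ whenever $c\neq1$. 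You were right to insert the normalization; it is an implicit hypothesis of the corollary which should be made explicit (equivalently, (1) should specify that $s\mapsto\phi_1(sX)$ is Seitz's monomorphism $\varphi_{\phi_1(X)}$ rather than a reparametrization of it). Second, your Frobenius-descent argument for~(3) in the exceptional case is the right idea but is only sketched; the two $F$-stability claims (for saturations and for the class of good $A_1$'s) genuinely need to be checked, since in exceptional type the construction of $\phi$ in Proposition~\ref{additive} involves choices ($X$, $T$, $W$) for which $\mathbb F_p$-rationality is not obvious. Given that the paper itself offers no proof of~(3), your level of detail is in line with what the paper expects the reader to supply, but this step is not yet a complete argument.
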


\begin{remark}\label{CLNexplained}
In \cite{CLN} this result was established for all simple groups $G$ in separably good characteristic provided that $\mathcal{N}_1(\mathfrak{g})$ is a normal variety.  It is not known in general whether this normality condition holds when $p<h$, and the authors observed that this condition could be dropped if, in our notation, $\varepsilon_P$ came from restricting a Springer isomorphism (see the final paragraph of \S 2.7 of \textit{loc. cit.}).  The authors demonstrate the importance of $\phi_1$ (which they call ``exp") in Sections 3 and 4 of \textit{loc. cit.}, using it in a critical way to prove significant results about cohomological support varieties of rational $G$-modules.  This corollary is then a first step in extending their results to smaller primes.
\end{remark}

\bigskip
\noindent \textbf{Acknowledgements:} We wish to acknowledge helpful discussions and comments on various versions of this paper from Zongzhu Lin, George McNinch, Dan Nakano, Arun Ram, and Craig Westerland.  We thank the referees for many good suggestions which have surely improved the exposition in this paper.  This research was partially supported by grants from the Australian Research Council (DP1095831, DP0986774 and DP120101942).

\end{document}